\documentclass[12pt]{article}

\usepackage{enumitem}
\usepackage{mathtools}
\usepackage{xr}

\usepackage{url}
\usepackage{amsfonts}
\usepackage{amsmath,amssymb,color}
\usepackage{graphicx}
\usepackage{enumitem}
\usepackage{multirow}
\usepackage{bm}
\usepackage{verbatim}
\usepackage{float}
\usepackage[toc,page]{appendix}
\usepackage[doublespacing]{setspace}
\usepackage{apacite}
\usepackage{natbib}
\usepackage{adjustbox}
\usepackage{rotating}
\usepackage{subcaption}
\usepackage{amsthm}
\usepackage{booktabs,dcolumn,caption}

\usepackage{mathabx}

\usepackage{array}

\usepackage{amsthm}
\usepackage{cleveref}

\numberwithin{equation}{section}
\numberwithin{figure}{section}

\newcolumntype{L}[1]{>{\raggedright\let\newline\\\arraybackslash\hspace{0pt}}p{#1}}
\newcolumntype{C}[1]{>{\centering\let\newline\\\arraybackslash\hspace{0pt}}p{#1}}
\newcolumntype{R}[1]{>{\raggedleft\let\newline\\\arraybackslash\hspace{0pt}}p{#1}}

 \oddsidemargin=0in
 \evensidemargin=0in
 \topmargin=0in
 \headsep=0in
 \headheight=0in
 \textheight=9in
 \textwidth=6.5in

\newcommand{\pr}{{\text{pr}}}
\newcommand{\bfs}{{\boldsymbol s}}
\def\T{{ \mathrm{\scriptscriptstyle T} }}

\theoremstyle{plain}
\newtheorem{theorem}{Theorem}[section]
\newtheorem{lemma}[theorem]{Lemma}

\newtheorem{corollary}{Corollary}

\newtheorem{definition}{Definition}

\title{\LARGE Adjusting for non-confounding covariates in \\case-control association studies}
\author{Siliang Zhang, Jinbo Chen, Zhiliang Ying and Hong Zhang}

\begin{document}
\maketitle

\doublespacing

\begin{abstract}
There is a considerable literature in case-control logistic regression on whether or not non-confounding covariates should be adjusted for. However, only limited and ad hoc theoretical results are available on this important topic. A constrained maximum likelihood method was recently proposed, which appears to be generally more powerful than logistic regression methods with or without adjusting for non-confounding covariates. This note provides a theoretical clarification for the case-control logistic regression with and without covariate adjustment and the constrained maximum likelihood method on their relative performances in terms of asymptotic relative efficiencies. We show that the benefit of covariate adjustment in the case-control logistic regression depends on the disease prevalence. We also show that the constrained maximum likelihood estimator gives an asymptotically uniformly most powerful test. 
\end{abstract}	
\noindent
KEY WORDS: Asymptotic relative efficiency; Case-control design; Constrained maximum likelihood; Logistic regression; Non-confounding covariate.

\section{Introduction}

It is well known that adjusting for baseline covariates can lead to an improved statistical inference efficiency \citep{fisher1932}. 
Through rigorous derivation, \citet{robinson1991some} showed that in logistic regression, adjusting for non-confounding covariates in randomized clinical trials always benefits the testing for treatment effect in terms of Pitman's asymptotic relative efficiency, albeit with some estimation precision loss. 
\citet{neuhaus1998} extended the results of \citet{robinson1991some} to generalized linear models.

It is more complicated, however, when data are collected retrospectively but analyzed with prospective logistic regression. In particular, \citet{kuo2010} showed through simulations that adjusting for non-confounding covariates in case-control studies may decrease the statistical inference efficiency for exposure-disease association; see \citet{xing2010adjusting} for additional comments.
\citet{pirinen2012} showed that adjusting for non-confounding covariates in case-control studies can decrease estimation precision, resulting in a loss of power provided that the estimation is approximately unbiased. In general, covariate adjustment could result in both bias and loss of efficiency in estimation, and it is not clear how they affect the hypothesis testing for exposure-disease association.

\cite{zaitlen2012informed, zaitlen2012analysis} proposed a liability threshold model that exploits covariate-specific prevalence information to improve the inference efficiency of exposure-disease association. \citet{mpMLE2017} developed a constrained profile maximum likelihood method with known disease prevalence and independence between the exposure and covariate. 
Their simulation results indicate that the method outperforms the standard logistic regression with or without adjusting for covariates. 

In this paper, we derive theoretical properties for the case-control logistic regression methods with and without covariate adjustment, and the constrained maximum likelihood method. Specifically, when both exposure and covariate are dichotomous, we derive the asymptotic biases, asymptotic variances and asymptotic relative efficiencies of the three methods. Furthermore, we obtain the asymptotic distribution of the constrained maximum likelihood estimator with a possibly misspecified disease prevalence. Our theoretical findings are at least threefold. First, adjusting for non-confounding covariates can decrease power in case-control studies with a low disease prevalence, which extends the finding of \cite{robinson1991some} and \cite{neuhaus1998} to case-control studies.
Second, the constrained maximum likelihood method has a uniform power advantage over the other two methods. Third, the constrained maximum likelihood method is robust against prevalence misspecification. 

\section{Models and Approaches}\label{model&method}

Consider a case-control study design involving a binary response variable $D$ ($D=1$: case; $D=0$: control), a binary exposure variable of interest $E$ ($E=1$: exposed; $E=0$: unexposed) and a binary covariate $X$ ($X=1$: high risk category; $X=0$: low risk category). The exposure variable $E$ could be a genetic mutation or an environmental exposure. The covariate $X$ is assumed to be independent of $E$ throughout this paper. Note that spurious association could be produced when $X$ and $E$ are dependent but $X$ is not adjusted for.
Let $f=\pr(D=1)$, $\theta=\pr(X=1)$ and $\pi=\pr(E=1)$ be the prevalences of $D$, $X$ and $E$, respectively, in the population from which cases and controls are sampled. Throughout this paper, we assume that $X$ and $E$ are non-degenerate so that $0<\pi,\theta<1.$
Furthermore, we assume that the following logistic regression model holds true:
\begin{equation}\label{Adj}
p_{ij}(\alpha,\beta,\gamma)=\pr(D=1\mid X=i,E=j)=\frac{\exp(\alpha+\beta i+\gamma j)}{1+\exp(\alpha+\beta i+\gamma j)},
\end{equation}
where $\alpha$ is the baseline log-relative risk, and $\beta$ and $\gamma$ are log odds ratios. Note that the above model does not involve $E$-$X$ interaction term due to the assumption that the $E$-$D$ odds ratio does not depend on $X$. We are interested in testing the null hypothesis of no association between $D$ and $E$ with the adjustment of $X$, i.e., $H_0: \gamma=0$. Under the case-control study design, data for $(E,X)$ are randomly sampled from case population ($D=1$) and control population ($D=0$). Let $n_{dij}$ denote the number of subjects with $D=d$, $X=i$ and $E=j$. The total numbers of cases and controls are denoted by $n_{1++}$ and $n_{0++}$, respectively, and let $\nu=n_{1++}/n_{0++}$. 

This paper provides a theoretical clarification on the asymptotic relative efficiencies for three methods. The first method, henceforth referred to as ``\textsc{Adj}", fits model (\ref{Adj}) to the case-control data by adjusting for $X$ as if the data were prospectively collected. The corresponding estimator of $\gamma$, $\hat\gamma_A$, is the maximizer of the prospective likelihood function, which is consistent, asymptotically normally distributed, and semiparametric efficient \citep{anderson1972,prentice1979logistic, breslow2000}. \citet{robinson1991some} derived a closed-form estimator of the asymptotic variance of $\hat\gamma_A$. The null hypothesis, $H_0:\gamma=0$, can be tested using a Wald statistic. 

The second method, referred to as ``\textsc{Mar}", tests the marginal association between $D$ and $E$. \textsc{Mar} fits the following logistic regression without adjusting for $X$:
\begin{equation}
\label{Mar}
\pr(D=1\mid E=j) = \frac{\exp(\alpha_0 + \gamma_0 j)}{1+\exp(\alpha_0 + \gamma_0 j)},
\end{equation}
where $\alpha_0$ is the marginal baseline log-relative risk and $\gamma_0$ is the marginal $E$-$D$ log odds ratio. Note that $\alpha_0$ and $\gamma_0$ generally differ from $\alpha$ and $\gamma$ in model (\ref{Adj}) unless $\beta$ equals zero. The corresponding maximum likelihood estimator of $\gamma_0$, denoted by $\hat\gamma_M$, takes the form $\hat\gamma_M = \log(n_{1+1}/n_{1+0}) - \log(n_{0+1}/n_{0+0}).$
The null hypothesis of no association between $D$ and $E$, formulated as $\gamma_0=0$, can again be tested using a Wald statistic. 

The third method, referred to as ``\textsc{AdjCon}", is based on a constrained maximum likelihood method \citep{mpMLE2017}. \textsc{AdjCon} maximizes the same retrospective likelihood function of the first method by additionally imposing two constraints. The first constraint is that $E$ and $X$ are independent, and the second constraint is that the true prevalence of $D$ is known. The second constraint can be expressed as 
$f = \sum_{i=0}^1\sum_{j=0}^1\pr(D=1\mid X=i,E=j)\pr(X=i)\pr(E=j)$, or equivalently,
\begin{equation}\label{theta}
\theta = \{f-p_{01}\pi-p_{00}(1-\pi)\}/\{p_{11}\pi+p_{10}(1-\pi)-p_{01}\pi-p_{00}(1-\pi)\},
\end{equation}
where $p_{ij}=p_{ij}(\alpha,\beta,\gamma)$ is defined in \eqref{Adj}.
The retrospective likelihood function under these two constraints $f=\pr(D=1)$ can be obtained by profiling out $\theta$:
\begin{equation}\label{likfun}
\prod_{d=0}^1\prod_{i=0}^1\prod_{j=0}^1 \{\pr(D=d\mid X=i,E=j)\pr(X=i)\pr(E=j)\}^{n_{dij}},
\end{equation}
where $\pr(X=1)$ is replaced with the right-hand side of (\ref{theta}). The corresponding maximum likelihood estimator, denoted by $\hat\gamma_{AC}$, can then be numerically obtained using any non-linear optimization algorithm. The null hypothesis $H_0:\gamma=0$ can be tested using a Wald statistic based on $\hat\gamma_{AC}$. Intuitively, \textsc{AdjCon} should be more efficient than \textsc{Adj} since the former makes use of two additional constraints.
Indeed, simulation results in \cite{mpMLE2017} showed that \textsc{AdjCon} outperforms both \textsc{Adj} and \textsc{Mar}.

\section{Main Results}\label{mainresult}
In this section, we establish theoretical properties for the estimators $\hat\gamma_M$, $\hat\gamma_A$ and $\hat\gamma_{AC}$. Specifically, asymptotic biases are derived in Section \ref{sec:asymptotic_bias}; asymptotic distributions are presented in Section \ref{sec:asymptotic_dist}; asymptotic relative efficiencies for testing the null hypothesis are given in Section \ref{sec:asymptotic_re}; robustness of $\hat\gamma_{AC}$ when prevalence is misspecified is investigated in Section \ref{sec:misspecification}.
\subsection{Asymptotic bias without covariate adjustment}\label{sec:asymptotic_bias}
Here we derive an asymptotic expression and the corresponding asymptotic bias for $\hat{\gamma}_M$. Let $n=n_{1++} + n_{0++}$. To avoid singularity, assume that $\nu = n_{1++}/n_{0++}$ is bounded away from zero and infinity.

\begin{lemma}\label{lemma1}
We have the following asymptotic expansion for the marginal maximum likelihood estimator $\hat{\gamma}_M$:
	\begin{equation}\label{egamma0}
	\hat{\gamma}_M=\gamma+\delta+O_P(n^{-1/2}) \text{ as }n \to \infty,
	\end{equation}
	where 
	\begin{equation}\label{delta}
		\delta = \log\bigg\{ 1 + \frac{e^\alpha(b_1 - b_2)(1-e^\gamma)}{(1+e^{\alpha+\gamma} b_1)(1+e^\alpha b_2)}\bigg\},
	\end{equation}
and $b_1=1+(e^\beta-1)(1-\theta), b_2=\{1+(e^{-\beta}-1)(1-\theta)\}^{-1}.$
\end{lemma}

It can be shown that $b_1\geq b_2$, with the equality holding if and only if $\gamma=0$ or $\beta=0$. Furthermore, the signs of $\gamma$ and $\delta$ are opposite and $|\delta|\le|\gamma|$. Refer to (S6) and (S7) in Supplementary Material for details. As a result, we have the following corollary.
\begin{corollary}\label{corollary1}
The limiting value of $\hat\gamma_M$, $\gamma+\delta$, is shrunk towards zero (i.e., $\vert\gamma+\delta\vert\leq \vert\gamma\vert$ and the signs of $\gamma$ and $\delta$ are opposite). Furthermore, the asymptotic bias $\delta$ equals zero if and only if $X$ or $E$ is not associated with $D$. Finally, $|\delta|$ is maximized at $f=f^*$:
\begin{equation}\label{fmin}
f^*=\sum_{i=0}^1\sum_{j=0}^1 p_{ij}(\alpha^*,\beta,\gamma) \theta^i(1-\theta)^{1-i}\pi^j(1-\pi)^{1-j},
\end{equation}
where
\begin{equation}\label{alphamin}
\alpha^*= -\frac{1}{2}\{\log(b_1b_2) + \gamma\}.
\end{equation}
\end{corollary}

Lemma \ref{lemma1} and Corollary \ref{corollary1} confirm the empirical observations that the maximum likelihood estimator of $\gamma$ is conservative when ignoring non-confounding covariates \cite[]{stringer2011}. The asymptotic unbiasedness conditions $\beta=0$ and $\gamma=0$ correspond to two non-confounding assumptions in prospective logistic regression \citep{robinson1991some}. \citet{gail1984biased} and \citet{neuhausjewell1993} obtained similar results, but only for $\beta$ around zero. 
In contrast, our results hold for general $\beta$. Moreover,  $f\to 0$ implies $\delta\to 0$, i.e., adjusting for $X$ results in a very small bias in the low disease prevalence situation, which is consistent with previous findings \citep{lee1982specification}. Figure \ref{figure12}(A) displays the asymptotic bias in one parameter setting based on Lemma \ref{lemma1}. The bias appears to increase with $f$ for $f\in (0,f^*]$ and decrease with $f$ for $f\in [f^*,1)$.
Unlike $\hat\gamma_M$, both $\hat{\gamma}_A$ and $\hat{\gamma}_{AC}$ are asymptotically unbiased under model \eqref{Adj}. That is, $\hat{\gamma}_A = \gamma + O_P(n^{-1/2})$ \citep{anderson1972,prentice1979logistic} and $\hat{\gamma}_{AC} = \gamma + O_P(n^{-1/2})$
\citep{mpMLE2017}.

\begin{figure}
	\centering
	\includegraphics[width=1.0\linewidth]{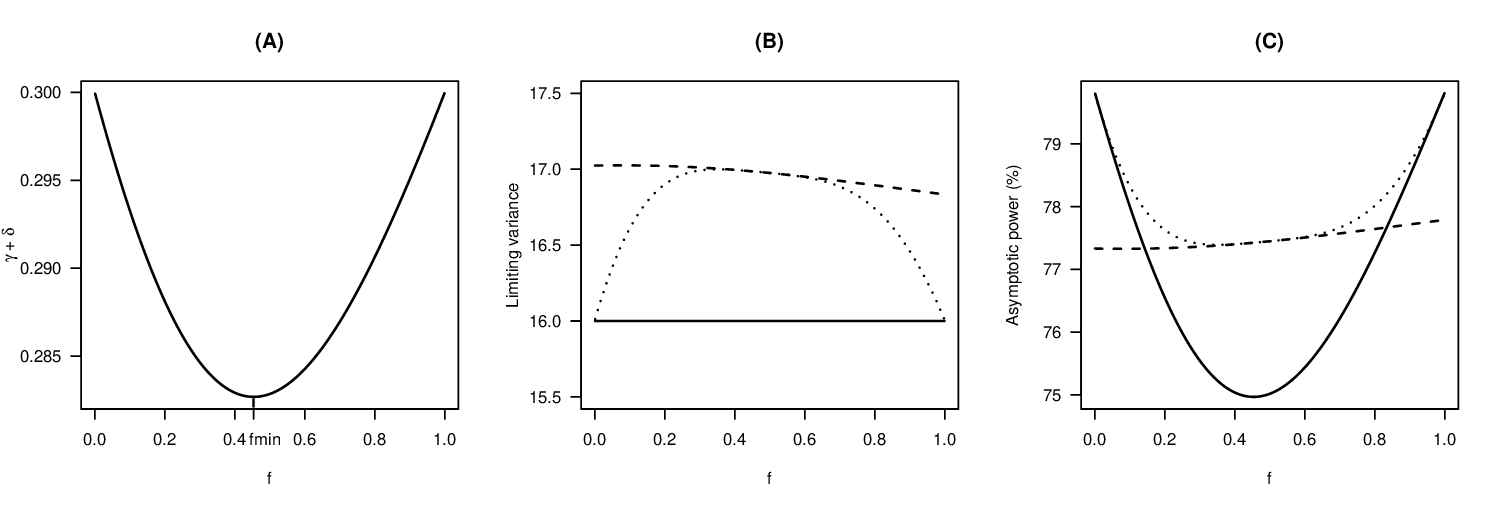}
	\caption{(A) The limiting value of $\hat{\gamma}_M$, $\gamma+\delta$, as a function of the disease prevalence $f$, 
	with the underlying parameters being $\pi=0.5$, $\theta=0.4$, $\beta=1$ and $\gamma=0.3$; (B) The limiting variances of $n^{1/2}\hat{\gamma}_A$ (dashed line), $n^{1/2}\hat{\gamma}_{AC}$ (dotted line) and $n^{1/2}\hat{\gamma}_M$ (solid line), with the underlying parameters being $\pi=0.5$, $\theta=0.4$, $\beta=1$, $\gamma=0.05$ and $\nu=1$; (C) The asymptotic powers of \textsc{Mar} (solid line), \textsc{Adj} (dashed line) and \textsc{AdjCon} (dotted line), with the underlying parameters being $n=5\times10^4$, $q=1$, $\theta=0.4$, $\pi=0.5$, $\beta=1$ and $\gamma=0.05$.}
	\label{figure12}
\end{figure}

\subsection{Asymptotic normality}\label{sec:asymptotic_dist}

In this subsection, we establish the asymptotic normality for $\hat \gamma_M$, $\hat \gamma_A$ and $\hat \gamma_{AC}$.
\begin{lemma}\label{lemma2}
As $n$ goes to infinity, $n^{1/2}(\hat{\gamma}_M-\gamma-\delta)$, $n^{1/2}(\hat{\gamma}_A-\gamma)$ and $n^{1/2}(\hat{\gamma}_{AC}-\gamma)$ converge in distribution to normal with mean zero and variances $\sigma_M^2, \sigma_A^2$ and $\sigma_{AC}^2$, whose explicit expressions are given in (S8), (S10) and (S14) in Supplementary Material.
\end{lemma}

We can analytically compare $\sigma_M^2, \sigma_A^2$ and $\sigma_{AC}^2$ using their explicit expressions, especially when $\gamma\to0$ (and $f\to0$), as detailed in the following corollary.

\begin{corollary}\label{corollary2}
For $\sigma_M^2$, $\sigma_{AC}^2$ and $\sigma_A^2$, we have the following:

1. $\sigma_M^2 \leq \sigma_A^2$, with equality holding if and only if $\beta=0$; 

2. If $\gamma\to0$, then $\sigma^2_{M}\rightarrow\sigma^2_0$ and $\sigma_A^2\rightarrow \lambda\sigma_0^2$, where $\sigma_0^2=(2+\nu+1/\nu)/\{\pi(1-\pi)\}$, 
\begin{equation}\label{lambda}
\lambda=1 + \frac{\nu\theta(1-\theta)}{(1+\nu)}\frac{(1-e^\beta)^2}{ \big\{(1-\theta)\phi + e^\beta\theta\phi^{-1}\big\}^2+\nu e^\beta \big\{ (1-\theta)\phi + \theta\phi^{-1} \big\}^2} \text{ with } \phi=\sqrt{\frac{1+e^{\alpha+\beta}}{1+e^{\alpha}}},
\end{equation}
where $\lambda\geq 1$ and the equality holds if and only if $\beta=0$;

3. If $\gamma\rightarrow 0$ and $f\rightarrow 0$, then we have $\lambda\rightarrow \lambda_0$, $\sigma^2_{AC}\rightarrow\sigma^2_0$ and $\sigma_A^2\rightarrow\lambda_0  \sigma_0^2$, where
\begin{equation}\label{lambda0}
\lambda_0 = 1+\frac{\nu\theta(1-\theta)}{(1+\nu)}\frac{(1-e^\beta)^2}{(1-\theta+e^\beta\theta)^2+\nu e^\beta}\ge1,
\end{equation}
and $\lambda_0=1$ if and only if $\beta=0$.
\end{corollary}

Unlike linear models, adjusting for non-confounding covariates in case-control logistic regression always leads to an increase in the variance of $\gamma$ estimator, i.e., $\sigma_M^2\leq\sigma_A^2$, as claimed in Colollary \ref{corollary2}. This result agrees with the finding of \cite{robinson1991some} for prospective studies.
In the rare disease situation, Corollary \ref{corollary2} states that $\hat{\gamma}_M$ and $\hat{\gamma}_{AC}$ have the same asymptotic variance, and that $\hat{\gamma}_A$ has a larger asymptotic variance unless the covariate $X$ is independent of the disease $D$. 
This result complements that of \cite{pirinen2012}, which only derives the approximated ratio of the variances for $\hat{\gamma}_M$ and $\hat{\gamma}_A$.
Figure \ref{figure12}(B) displays the asymptotic variances as functions of $f$ in one parameter setting. The asymptotic variance of $\hat{\gamma}_M$ appears to be the smallest in general. The variance of $\hat{\gamma}_A$ is the largest among the three estimators. On the other hand, the variance of $\hat{\gamma}_{AC}$ falls in between the other two, and it first increases then decreases with $f$.

We now compare the Wald test performances of the three methods (\textsc{Mar}, \textsc{Adj} and \textsc{AdjCon}) under the contiguous alternative hypothesis $H_1:\gamma=c_1n^{-1/2}$, where $c_1$ is a fixed non-zero constant. With the asymptotic expectation and variance for each method in Lemmas 1 and 2, we can derive the limiting power for the corresponding Wald statistic under $H_1$. 
As shown in Figure \ref{figure12}(C), \textsc{AdjCon} appears to be more powerful than \textsc{Adj}, which is due to the asymptotic unbiasedness of $\hat{\gamma}_A$ and $\hat{\gamma}_{AC}$ and the smaller asymptotic variance of \textsc{AdjCon}, especially when $f$ is close to 0 or 1. When $f$ approaches 0.5, the power gain of \textsc{AdjCon} over \textsc{Adj} becomes negligible, as their asymptotic variances converge.
When $f$ is close to 0 or 1, \textsc{Mar} appears to be more powerful than \textsc{Adj}, as they have similar means but \textsc{Adj} gives a larger variance. However, \textsc{Mar} becomes less powerful than the other two methods as $f$ takes value around 0.5. This stems from the fact that $\hat{\gamma}_M$ is considerably biased toward zero and its variance advantage cannot be compensated for the bias disadvantage.
\textsc{AdjCon} appears to be slightly more powerful than \textsc{Mar} when $f$ is close to 0 or 1 (Figure \ref{figure12}(C)). The theoretical results in the next subsection confirm a part of the above empirical findings.

\subsection{Asymptotic relative efficiencies}\label{sec:asymptotic_re}

Asymptotic power comparison of various methods is carried out analytically through Pitman's asymptotic relative efficiency \citep{pitman1979, serfling2009}.
For test statistics $T_1$ and $T_2$, the $T_1$ vs. $T_2$ Pitman asymptotic relative efficiency is equal to $e_P(T_1,T_2)=\lim (m_2/m_1)$ (refer to Supplementary Material for details), where $m_1$ and $m_2$ are the sample sizes of $T_1$ and $T_2$ for achieving the same asymptotic power under the contiguous alternative hypotheses $\gamma=cm_2^{-1/2}$ and $\gamma=cm_1^{-1/2}$, respectively. Therefore, $e_P(T_1,T_2) > 1$ indicates that $T_1$ is asymptotically more powerful than $T_2$, and vice versa.
Denote by $T_M$, $T_A$ and $T_{AC}$ the Wald statistics corresponding to $\hat{\gamma}_M$, $\hat{\gamma}_A$ and $\hat{\gamma}_{AC}$, respectively. Let $\rho = e^{\alpha}$, which is related to the disease prevalence. We evaluate $e_P(T_M,T_A)$ and $e_P(T_M,T_{AC})$ in Theorem \ref{theorem1} and Theorem \ref{theorem2}, respectively.

\begin{theorem}	\label{theorem1}
	The $T_M$ vs. $T_A$ Pitman asymptotic relative efficiency has the following asymptotic representation:
	\begin{equation}\label{PARE0}
	e_P(T_M, T_A)=\bigg\{\frac{b_1b_2\rho^2+2b_2\rho+1}{b_1b_2\rho^2+(b_1+b_2)\rho+1}\bigg\}^2 \lambda,
	\end{equation}
	where $\lambda\geq 1$ is defined in \eqref{lambda} and $b_1$ and $b_2$ are defined below \eqref{delta}.
\end{theorem}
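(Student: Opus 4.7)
The plan is to apply the standard formula for Pitman's asymptotic relative efficiency of Wald tests built from asymptotically normal estimators: under the local alternative $H_1:\gamma=cn^{-1/2}$, the limiting non-centrality parameter of the Wald statistic associated with an estimator $\hat\gamma$ is $c\,\mu'(0)/\sigma(0)$, where $\mu(\gamma)$ and $\sigma^2(\gamma)$ are the asymptotic mean and variance of $n^{1/2}\hat\gamma$. Consequently
$$e_P(T_M,T_A)=\{\mu_M'(0)/\mu_A'(0)\}^2\cdot\sigma_A^2(0)/\sigma_M^2(0).$$
First I would collect the ingredients from the earlier lemmas: by Lemma~\ref{lemma1}, $\mu_M(\gamma)=\gamma+\delta(\gamma)$, whereas $\hat\gamma_A$ is asymptotically unbiased so $\mu_A(\gamma)=\gamma$ and $\mu_A'(0)=1$. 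By Lemma~\ref{lemma2}, at $\gamma=0$ we have $\sigma_M^2=\sigma_0^2$ and $\sigma_A^2=\lambda\sigma_0^2$, so the variance ratio in the formula above is exactly $\lambda$.

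The core remaining task is to compute $\mu_M'(0)$ from the explicit form of $\delta$. Writing $\rho=e^\alpha$ and
$$g(\gamma)=\frac{\rho(b_1-b_2)(1-e^\gamma)}{(1+\rho e^\gamma b_1)(1+\rho b_2)},$$
one has $\delta(\gamma)=\log\{1+g(\gamma)\}$ with $g(0)=0$, so $\mu_M'(0)=1+g'(0)$. The quotient rule (using $g(0)=0$, which kills the term coming from differentiating the denominator) gives $g'(0)=-\rho(b_1-b_2)/\{(1+\rho b_1)(1+\rho b_2)\}$. The only non-routine algebraic step is the identity
$$(1+\rho b_1)(1+\rho b_2)-\rho(b_1-b_2)=1+2\rho b_2+\rho^2 b_1 b_2,$$
obtained by expanding the product and cancelling $\rho b_1$. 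This yields
$$\mu_M'(0)=\frac{b_1 b_2\rho^2+2 b_2\rho+1}{b_1 b_2\rho^2+(b_1+b_2)\rho+1},$$
and substituting into the ARE formula gives the stated expression.

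The main obstacle is not this algebra but the justification that the Pitman formula applies despite the asymptotic bias of $\hat\gamma_M$. Because $\mu_M(\gamma)\neq\gamma$, one has to verify that under $\gamma_n=cn^{-1/2}$ the expansion of Lemma~\ref{lemma2}, $n^{1/2}\{\hat\gamma_M-\gamma-\delta(\gamma)\}\Rightarrow N(0,\sigma_M^2(0))$, remains valid uniformly in an $O(n^{-1/2})$-neighborhood of $\gamma=0$. This is what ensures $n^{1/2}\hat\gamma_M$ acquires the drift $c\,\mu_M'(0)$ rather than merely $c$. The uniformity follows from the smoothness of $\delta$ in $\gamma$, continuity of $\sigma_M^2$ and $\sigma_A^2$ in $\gamma$, and standard contiguity arguments for the retrospective likelihood; once that is in place, the theorem follows from the calculation above.
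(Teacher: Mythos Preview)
Your proposal is correct and follows essentially the same route as the paper: both compute $\mu_M'(0)=1+\delta'(0)$ by differentiating the explicit expression for $\delta$ from Lemma~\ref{lemma1}, obtain the same closed form via the same algebraic simplification, and then invoke Lemma~\ref{lemma2} for the variance ratio $\lambda$. Your explicit discussion of the regularity conditions (uniformity/contiguity) needed for the Pitman formula to apply is more careful than the paper, which simply cites the definition and the Serfling reference.
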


Theorem \ref{theorem1} gives an analytical form for $e_P(T_M, T_A)$, which allows us to evaluate their asymptotic relative efficiencies under different prevalence levels. In particular, we have the following result in the rare disease situation.

\begin{corollary}\label{corollary3}
	For rare disease (i.e., $\rho\to0$), the $T_M$ vs. $T_A$ Pitman asymptotic relative efficiency has the following asymptotic expansion:
	\begin{equation}\label{PARE1}
	e_P(T_M, T_A) = \lambda_0 + O(\rho),
	\end{equation}
	where $\lambda_0\ge1$ with $\lambda_0$ being defined in \eqref{lambda0}, and the equality holds if and only  $X$ and $D$ are independent (or equivalently $\beta = 0$).
\end{corollary}

It is not surprising that \textsc{AdjCon} is generally more powerful than \textsc{Adj} since the two methods are based on the same model but the former incorporates two additional constraints. 
Furthermore, as indicated in Figure \ref{figure12}(C), \textsc{AdjCon} also appears to be more powerful than \textsc{Mar}. The following theorem gives a theoretical justification.

\begin{theorem}	\label{theorem2}
	For rare disease (i.e., $\rho\to0$), the $T_M$ vs. $T_{AC}$ Pitman asymptotic relative efficiency has the following asymptotic representation:
	\begin{equation}\label{PARE2}
	e_P(T_M, T_{AC}) = 1 + \tau\rho^2 + o(\rho^2),
	\end{equation}
	where $\tau\leq 0$ with $\tau$ being defined in  (S28) in Supplementary Material, and the equality holds if and only if $X$ and $D$ are independent (or equivalently $\beta=0$).
\end{theorem}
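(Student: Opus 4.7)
The plan is to parallel the Pitman-ARE derivation used for Theorem~\ref{theorem1}. Under the local alternative $H_1:\gamma=cn^{-1/2}$, Lemma~\ref{lemma1} gives the asymptotic mean of $n^{1/2}\hat\gamma_M$ as $n^{1/2}(\gamma+\delta(\gamma;\alpha,\beta))$, while the stated unbiasedness $\hat\gamma_{AC}=\gamma+O_P(n^{-1/2})$ gives mean $n^{1/2}\gamma$ for $\hat\gamma_{AC}$. Since $\sigma_M^2\to\sigma_0^2$ as $\gamma\to0$ (Lemma~\ref{lemma2}), the Wald-test Pitman-ARE formula reduces to
\begin{equation*}
e_P(T_M,T_{AC})=(1+\delta'(0))^2\,\sigma_{AC}^2\big|_{\gamma=0}\big/\sigma_0^2.
\end{equation*}
Differentiating the closed form of $\delta$ at $\gamma=0$ gives $1+\delta'(0)=\{1+2b_2\rho+b_1b_2\rho^2\}/\{1+(b_1+b_2)\rho+b_1b_2\rho^2\}$, from which
\begin{equation*}
(1+\delta'(0))^2=1-2(b_1-b_2)\rho+(b_1-b_2)(3b_1+b_2)\rho^2+O(\rho^3).
\end{equation*}

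The principal step is an $O(\rho^2)$ expansion of $\sigma_{AC}^2|_{\gamma=0}$. I would use the explicit form of $\sigma_{AC}^2$ in the Supplementary Material, obtained by inverting the constrained Fisher information for the retrospective log-likelihood (\ref{likfun}) after profiling out the prevalence via (\ref{theta}). Each cell probability $p_{ij}(\alpha,\beta,0)$ is a rational function of $\rho=e^\alpha$ with $p_{ij}(0,\beta,0)=0$, so the constrained information matrix is analytic in $\rho$ near the origin. Corollary~\ref{corollary2} fixes the $\rho^0$ term of $\sigma_{AC}^2|_{\gamma=0}$ as $\sigma_0^2$; inverting the information matrix to order $\rho^2$ and reading off the $(\gamma,\gamma)$-entry should then give
\begin{equation*}
\sigma_{AC}^2|_{\gamma=0}/\sigma_0^2=1+2(b_1-b_2)\rho+c_2\rho^2+O(\rho^3)
\end{equation*}
for an explicit $c_2$. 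Multiplying the two expansions, the linear-$\rho$ terms cancel and we obtain $e_P(T_M,T_{AC})=1+\tau\rho^2+o(\rho^2)$ with $\tau$ a polynomial in $b_1$, $b_2$, and $c_2$.

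It then remains to verify $\tau\leq0$ with equality iff $\beta=0$. The target is to simplify $\tau$ (after substituting the explicit $c_2$) into the form $-(b_1-b_2)^2\,R$ for a strictly positive function $R$ of $(\theta,\pi,\nu,\beta)$, which would yield both the nonpositivity and the equality condition, since a short algebraic check gives $b_1-b_2=b_2\theta(1-\theta)(e^\beta+e^{-\beta}-2)$ and the factor $(e^{\beta/2}-e^{-\beta/2})^2$ is nonnegative and vanishes precisely at $\beta=0$ (recall $0<\theta<1$).

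The hardest part of the proof is verifying the cancellation of the linear-in-$\rho$ term: the coefficient $2(b_1-b_2)$ must emerge from the variance expansion on the nose to offset the bias slope $\delta'(0)$. Although forced by the theorem statement, this is an algebraic identity at the level of the constrained Fisher information inversion and is not evident from its block structure. The cleanest route is likely a structural argument — e.g., identifying a scalar intermediate that ties the first-order perturbation of the $(\gamma,\gamma)$-entry of the constrained information to the same $\rho$-derivative of $\delta$ — rather than brute-force symbolic manipulation of the full inverse. Once this linear cancellation is in place, extracting the $\rho^2$-coefficient and exhibiting the factorization $\tau=-(b_1-b_2)^2 R$ is a bounded computation.
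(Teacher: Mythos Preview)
Your proposal is correct and follows essentially the same route as the paper: write $e_P(T_M,T_{AC})=(1+\delta'(0))^2\,\sigma_{AC}^2|_{\gamma=0}/\sigma_0^2$, expand both factors to second order in the rare-disease parameter, observe the linear terms cancel, and read off $\tau$. Two small points of divergence are worth flagging. First, the paper expands $\sigma_{AC}^2$ as a Taylor series in $f$ (computing $\partial_f\sigma_{AC}^2|_{f=0,\gamma=0}$ and $\partial_f^2\sigma_{AC}^2|_{f=0,\gamma=0}$ by symbolic algebra in Mathematica) and only then substitutes $f|_{\gamma=0}=(e^\beta\theta-\theta+1)\rho-(e^{2\beta}\theta-\theta+1)\rho^2+O(\rho^3)$; there is no structural shortcut for the linear cancellation, it simply falls out of the brute-force computation. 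Second, the explicit $\tau$ the paper obtains,
\[
\tau=-\frac{(1-\theta)\theta(e^\beta-1)^2\bigl\{(1+1/\nu)\bigl[(\theta(e^\beta-1)+1)^2+e^\beta\nu\bigr]+2(1+(e^{2\beta}-1)\theta)\bigr\}}{\{\theta(e^\beta-1)+1\}^2},
\]
factors as $-(b_1-b_2)R$ with $R>0$, not $-(b_1-b_2)^2R$; your sign argument still goes through because $b_1-b_2=\theta(1-\theta)(e^\beta-1)^2/(e^\beta\theta+1-\theta)\ge0$ with equality iff $\beta=0$.
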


\subsection{Constrained maximum method under prevalence misspecification}\label{sec:misspecification}

This section studies robustness of \textsc{AdjCon} against misspecification of disease prevalence. Numerical studies of \cite{mpMLE2017} suggested that \textsc{AdjCon} is not very sensitive to the misspecification. Before stating our theoretical result, we introduce some more notations and assumptions. 
Let $\bfs = (\beta,\gamma,\theta,\pi)^{\T}$ denote unknown model parameters. Note that the intercept parameter $\alpha$ is determined by $f$ and $\bfs$ according to \eqref{theta}. Denote by $l_f(\cdot)$ the log-likelihood function with given prevalence $f$. 
Let the true prevalence be $f_0$. Let $\bfs_f^*$ denote the maximizer of $E_{f_0}\{l_f(\bfs)\}$. Let $\hat\bfs_{f}$ denote the maximum likelihood estimator of $\bfs_f$ with the disease prevalence being specified to be $f$.
Throughout this section, we assume that $f$ is bounded away from 1 (i.e., $f\in(0,1-\epsilon]$ for some $\epsilon>0$) and for all $(\beta^*,\gamma^*,\theta^*,\pi^*)\in\{\bfs_f^*,f\in(0,1-\epsilon]\}$, $\beta^*$ and $\gamma^*$ are bounded away from infinity and $\theta^*$ and $\pi^*$ are bounded away from zero and one.

\begin{theorem}	\label{theorem3}
For any specified prevalence $f\in (0,1-\epsilon]$, we have the following asymptotic properties:
\begin{equation}
    \sqrt{n}(\hat\bfs_{f} - \bfs_{f}^*) \rightarrow N(0, \Sigma_{f}(\bfs_{f}^*))\text{ in distribution},
\end{equation}
\begin{equation}\label{eq:quasi_deviation}
    \Vert\bfs_{f}^* - \bfs_{f_0}^*\Vert\leq C_1\vert f-f_0\vert,
\end{equation}
and 
\begin{equation}
  \Vert\Sigma_{f}(\bfs_{f}^*) - \Sigma_{f_0}(\bfs_{f_0}^*)\Vert \leq C_2\vert f-f_0\vert,
\end{equation}
where $\Vert\cdot\Vert$ is the Euclidean norm, $\Sigma_f(\bfs_{f}^*)$ is the asymptotic covariance matrix of $\hat\bfs_{f}$ evaluated at $\bfs_{f}^*$, and $C_1$ and $C_2$ are constants independent of $f$.
\end{theorem}

Theorem \ref{theorem3} establishes the asymptotic normality of the maximum likelihood estimator with a possibly misspecified disease prevalence. Furthermore, the limiting value $\bfs_{f}^*$ of the maximum likelihood estimator $\hat\bfs_{f}$ and the corresponding asymptotic covariance matrix $\Sigma_{f}(\bfs_{f}^*)$ are Lipschitz continuous with respect to $f$, indicating that the statistical inference is not very sensitive to disease prevalence misspecification. 

\section*{Acknowledgement}\label{acknoledgement}
The research was supported in part by the Natural Science Foundation of China (12171451, 72091212), the US National Institutes of Health (R01-CA236468, R01-HL138306), the Shanghai Science and Technology Committee Rising-Star Program (22YF1411100).

\setcounter{equation}{0}
\renewcommand{\theequation}{A\arabic{equation}}

\appendix
\section*{Appendix}

The proofs of Lemma 1 and Corollary 1 are presented in Section~\ref{sec:proof_lemma1} and Section~\ref{sec:proof_coro1}, respectively. The proofs of Lemma 2 and Corollary 2 are given in Section~\ref{sec:proof_lemma2_coro2}. The definition of Pitman's asymptotic relative efficiency (Section 3.3 of the main text) is restated in Section~\ref{sec:define_pitman}. The proofs of Theorem 1 and Corollary 3 are presented in Section~\ref{sec:proof_theorem1_coro3}. The proof of Theorem 2 is presented in Section~\ref{sec:proof_theorem2}. 
The proof of Theorem 3 is given in Section~\ref{sec:proof_theorem3}. 
Additional discussion is provided in Section~\ref{sec:additional_discussion}.

\section{Proof of Lemma 1}\label{sec:proof_lemma1}
We adopt the notations of the main text, for example,
$$
  f = \pr(D=1),\quad \theta = \pr(X=1),\quad \pi = \pr(E=1).
$$
We also introduce some additional notations:
\begin{equation}\label{notation}
\begin{split}
  p_i = \pr(D=1\mid E=i)&,\quad q_{i} = 1-p_{i} = \pr(D=0\mid E=i),\\
  p_{ij} = \pr(D=1\mid X=i,E=j)&,\quad q_{ij} =1-p_{ij} = \pr(D=0\mid X=i,E=j),
\end{split}
\end{equation}
for $i=0,1;\ j=0,1$.

Throughout this document, we assume that $X$ and $E$ are independent unless specially noted, so that $p_i = p_{1i}\theta + p_{0i}(1-\theta),\ q_{i}= q_{1i}\theta + q_{0i}(1-\theta)$. Under the retrospective setting, the random variables $n_{1+1}$ and $n_{0+1}$ follow binomial distributions, i.e., $n_{1+1} \sim B(n_{1++}, p_1')$ and $n_{0+1}\sim B(n_{0++}, p_0')$, where
\begin{align}
p_1'= \pr(E=1\mid D=1) = \frac{p_1\pi}{f},&\quad p_0' = \pr(E=1\mid D=0) = \frac{q_1\pi}{1-f},\\
q_1'= \pr(E=0\mid D=1) = \frac{p_0(1-\pi)}{f},&\quad q_0'=\pr(E=0\mid D=0) = \frac{q_0(1-\pi)}{1-f}.\label{q1'}
\end{align} 

For any $f\in(0,1)$, it follows from the standard large sample theory for the sample odds ratio and (\ref{notation})-(\ref{q1'}) that
\begin{align}
\hat \gamma_M &= \log \frac{p_1'q_0'}{p_0'q_1'} + O_P(n^{-\frac{1}{2}})\nonumber \\
&= \log\frac{p_1q_0}{p_0q_1} + O_P(n^{-\frac{1}{2}})\nonumber\\
&= \log\frac{\{p_{11}\theta+p_{01}(1-\theta)\}\{q_{10}\theta+q_{00}(1-\theta)\}}{\left(p_{10}\theta+p_{00}(1-\theta)\right)\left(q_{11}\theta+q_{01}(1-\theta)\right)}+ O_P(n^{-\frac{1}{2}})\nonumber\\
&=\gamma + \log\bigg\{\frac{(1-\theta+e^{\alpha+\beta+\gamma}+e^\beta\theta)(1+e^\alpha\theta+e^{\alpha+\beta}(1-\theta))}{(1-\theta+e^{\alpha+\beta}+e^\beta\theta)(1+e^{\alpha+\gamma}\theta+e^{\alpha+\beta+\gamma}(1-\theta))}\bigg\} + O_P(n^{-\frac{1}{2}})\nonumber\\
&= \gamma + \log\bigg\{1 + \frac{e^\alpha(b_1 - b_2)(1-e^\gamma)}{(1+e^\alpha b_2)(1 + e^{\alpha+\gamma} b_1)}\bigg\}+O_P(n^{-\frac{1}{2}})\nonumber\\  
&= \gamma+\delta+O_P(n^{-\frac{1}{2}}), \label{gammahat}
\end{align}
where 
$$  b_1 = e^\beta(1-\theta) + \theta=1+(e^\beta-1)(1-\theta),\ b_2 = e^\beta/(1-\theta + e^\beta\theta)=\frac{1}{1+(e^{-\beta}-1)(1-\theta)},
$$
$$\hat\gamma_M=\log(n_{1+1}/n_{1+0}) - \log(n_{0+1}/n_{0+0}),$$ 
and
\begin{equation}\label{delta_sup}
    \delta = \log\bigg\{ 1 + \frac{e^\alpha(b_1 - b_2)(1-e^\gamma)}{(1+e^{\alpha+\gamma} b_1)(1+e^\alpha b_2)}\bigg\}.
\end{equation}
Moreover,
\begin{equation}\label{eq:b1b2_ineq}
    b_1 = 1 + (e^\beta-1)(1-\theta)= \frac{1+(e^\beta+e^{-\beta}-2)\theta(1-\theta)}{1+(e^{-\beta}-1)(1-\theta)}\geq \frac{1}{1+(e^{-\beta}-1)(1-\theta)}=b_2>0.
\end{equation}

\section{Proof of Corollary 1}\label{sec:proof_coro1}
It follows from $b_1\geq b_2 >0$ that
\begin{equation}\label{eq:b1b2_ineq2}
-\gamma < \delta \leq 0 \mbox{ if } \gamma>0 \mbox{ and } 0 \leq \delta < -\gamma \mbox{ if } \gamma<0 \mbox{ and } \delta=0 \mbox{ if } \gamma = 0,
\end{equation}
so that
$$
|\gamma+\delta| \leq |\gamma|.
$$
Furthermore, it is easily seen from the expression of $\delta$ given in (\ref{delta_sup}) of the main text that $\delta=0$ if and only if $b_1=b_2$ (which leads to $\beta=0$, i.e., $X$ is not associated with $D$) or $\gamma=0$ (i.e., $E$ is not associated with $D$).
Finally, setting the derivative of (\ref{delta_sup}) with respect to $\alpha$ to be 0, we can see that $|\delta|$ is minimized at $\alpha_{\min}$ defined in (8) of the main text.

\section{Proof of Lemma 2 and Corollary 2}\label{sec:proof_lemma2_coro2}

As defined in the main text, $\nu=n_{1++}/n_{0++}$, so that $n_{0++}=n/(1+\nu)$ and $n_{1++}=n\nu/(1+\nu)$. Assume a contiguous alternative scenario where $\gamma=cn^{-1/2}$.

First, we derive the asymptotic distribution of $\hat\gamma_M$. According to the standard large sample theory, the regularity conditions R1 - R3 \citep[see Chapter 4,][]{serfling2009} hold for logistic regression models, which gives
$$
n^{1/2} (\hat\gamma_M-\gamma-\delta)\rightarrow N(0,\sigma^2_M)\text{ in distribution},
$$
where the asymptotic variance is
\begin{equation}\label{sigmaM}
\sigma^2_M = \frac{n}{n_{0++}p_0'q_0'}+ \frac{n}{n_{1++}p_{1}'q_{1}'}=\frac{(1+\nu)}{p_0'q_0'} +  \frac{(1+\nu)}{\nu p_1'q_1'}.
\end{equation}
Since $\gamma=0$ implies that $p_1'=p_0'=\pi$, we have that
\begin{equation}\label{sigmaM0}
\sigma^2_M\rightarrow\sigma^2_0 \mbox{ as } \gamma\rightarrow0,
\end{equation}
where $\sigma_0^2=(2+\nu+1/\nu)/\{\pi(1-\pi)\}$. The above results hold for any $f\in(0,1)$.

Next we derive the asymptotic distribution of $\hat\gamma_A$. According to \cite{gart1962combination}, we have
$$
  n^{1/2}\left(\hat \gamma_A - \gamma\right) \rightarrow N(0,\sigma_A^2)\text{ in distribution},
$$
where
\begin{equation}\label{sigma_A}
\begin{aligned}
  \sigma_A^2 = & \bigg\{\left(\frac{n}{n_{0++}d_{00}h_{00}(1-h_{00})}+\frac{n}{n_{1++}d_{01}h_{01}(1-h_{01})}\right)^{-1}+\\
  &\left(\frac{n}{n_{0++}d_{10}h_{10}(1-h_{10})}+\frac{n}{n_{1++}d_{11}h_{11}(1-h_{11})}\right)^{-1}\bigg\}^{-1}\\
  =& \bigg\{\left(\frac{1+\nu}{d_{00}h_{00}(1-h_{00})} + \frac{1+\nu}{\nu d_{01}h_{01}(1-h_{01})}\right)^{-1}+\\
  & \left(\frac{1+\nu}{d_{10}h_{10}(1-h_{10})}+\frac{1+\nu}{\nu d_{11}h_{11}(1-h_{11})}\right)^{-1}\bigg\}^{-1}
\end{aligned}
\end{equation}
and
$$  d_{ij} = \pr(X=i\mid D=j),\quad h_{ij}=\pr(E=1\mid X=i,D=j).$$

If we denote
\begin{align*}
  a_{10} = \frac{d_{00}h_{00}}{1+\nu},\quad a_{20} = \frac{d_{00}(1-h_{00})}{1+\nu},\quad a_{30} = \frac{\nu d_{01}h_{01}}{1+\nu},\quad a_{40} = \frac{\nu d_{01}(1-h_{01})}{1+\nu},\\
  a_{11} = \frac{d_{10}h_{10}}{1+\nu},\quad a_{21} = \frac{d_{10}(1-h_{10})}{1+\nu},\quad a_{31} = \frac{\nu d_{11}h_{11}}{1+\nu},\quad a_{41} = \frac{\nu d_{11}(1-h_{11})}{1+\nu},
\end{align*}
then we have
$$  \sigma_M^2 = \frac{1}{a_{10}+a_{11}}+\frac{1}{a_{20}+a_{21}}+\frac{1}{a_{30}+a_{31}}+\frac{1}{a_{40}+a_{41}}$$
and
$$  \sigma_A^2 =  \bigg\{\left(\frac{1}{a_{10}}+\frac{1}{a_{20}}+\frac{1}{a_{30}}+\frac{1}{a_{40}}\right)^{-1}+\left(\frac{1}{a_{11}}+\frac{1}{a_{21}}+\frac{1}{a_{31}}+\frac{1}{a_{41}}\right)^{-1}\bigg\}^{-1}.
$$
Applying the Minkowski inequality, we immediately have that $\sigma_M^2 \leq \sigma_A^2$, and the inequality holds even when $X$ and $E$ are correlated. Moreover, the equality holds if and only if $a_{i1} = k a_{i0}\ (i=1,\ldots,4)$, or equivalently, $X$ is independent of $D$ (i.e., $\beta=0$).

Next, we compare $\sigma_M^2$ and $\sigma_A^2$ under the condition of $\gamma \rightarrow 0$. We rewrite the asymptotic variances as
\begin{align*}
  \sigma_M^2 =& \frac{(1+\nu)(1-f)^2}{q_0 q_1\pi(1-\pi)}+\frac{(1+\nu)f^2}{\nu p_0 p_1\pi(1-\pi)}\nonumber\\
  =& \frac{1}{\pi(1-\pi)} \bigg\{\frac{(1+\nu)(1-f)^2}{q_0 q_1} + \frac{(1+\nu)f^2}{\nu p_0 p_1}\bigg\}\nonumber\\
  =& \frac{1+\nu}{\pi(1-\pi)} \bigg\{\frac{(1-f)^2}{E(q_{X0}) E(q_{X1})} + \frac{f^2}{\nu E(p_{X0}) E(p_{X1}) }\bigg\}
\end{align*}
and
\begin{align*}
\sigma_A^2 =& \bigg\{(1-\theta)\left(\frac{(1+\nu)(1-f)}{q_{01}\pi}+\frac{(1+\nu)(1-f)}{q_{00}(1-\pi)}+\frac{(1+\nu)f}{\nu p_{01}\pi}+\frac{(1+\nu)f}{\nu p_{00}(1-\pi)}\right)^{-1} + \nonumber\\
  &\ \theta\left(\frac{(1+\nu)(1-f)}{q_{11}\pi}+\frac{(1+\nu)(1-f)}{q_{10}(1-\pi)}+\frac{(1+\nu)f}{\nu p_{11}\pi}+\frac{(1+\nu)f}{\nu p_{10}(1-\pi)}\right)^{-1}\bigg\}^{-1}\nonumber\\
  =& \left(1+\nu\right)\left[ E \bigg\{\frac{(1-f)}{q_{X1}\pi}+\frac{(1-f)}{q_{X0}(1-\pi)}+\frac{f}{\nu p_{X1}\pi}+\frac{f}{\nu p_{X0}(1-\pi)}\bigg\}^{-1}\right]^{-1}.
\end{align*}

If $\gamma\rightarrow 0$, then $p_{i1}\rightarrow p_{i0}$ and $q_{i1}\rightarrow q_{i0}$ for $i=1,2.$ Consequently,
\begin{align}
  &\lim_{\gamma\rightarrow 0} \frac{\sigma_A^2}{\sigma_M^2} 
  = \frac{\left[E \bigg\{\frac{(1-f)}{q_{X0}}+\frac{f}{\nu p_{X0}}\bigg\}^{-1}\right]^{-1}}{\left(\frac{1-f}{Eq_{X0}}\right)^2+\frac{1}{\nu}\left(\frac{f}{Ep_{X0}}\right)^2}\nonumber\\
  =& \frac{(1+\frac{1}{\nu})\left[ \bigg\{1+(\frac{1-\theta}{\theta})(\frac{1+e^{\alpha+\beta}}{1+e^\alpha})(\frac{\nu+e^{-\beta}}{\nu+1})\bigg\}^{-1} +  \bigg\{1+(\frac{\theta}{1-\theta})(\frac{1+e^\alpha}{1+e^{\alpha+\beta}})(\frac{\nu+e^\beta}{\nu+1})\bigg\}^{-1}\right]^{-1}}{1+\frac{1}{\nu}}\nonumber\\
  =& \bigg[ \bigg\{1+\bigg(\frac{1-\theta}{\theta}\bigg)\bigg(\frac{1+e^{\alpha+\beta}}{1+e^\alpha}\bigg)\bigg(\frac{\nu+e^{-\beta}}{\nu+1}\bigg)\bigg\}^{-1} \nonumber\\
  &+  \bigg\{1+\left(\frac{\theta}{1-\theta}\right)\bigg(\frac{1+e^\alpha}{1+e^{\alpha+\beta}}\bigg)\bigg(\frac{\nu+e^\beta}{\nu+1}\bigg)\bigg\}^{-1}\bigg]^{-1}\label{arp_AM}\\
  =& 1 + \frac{\nu\theta(1-\theta)}{(1+\nu)}\frac{(1-e^\beta)^2}{ \big\{(1-\theta)\phi + e^\beta\theta\phi^{-1}\big\}^2+\nu e^\beta \big\{ (1-\theta)\phi + \theta\phi^{-1} \big\}^2},\nonumber\\
  =& \lambda, \label{arp_AM_res}
\end{align}
where $\phi = \sqrt{\frac{1+e^{\alpha+\beta}}{1+e^{\alpha}}}$ and $\lambda \geq 1$, and $\lambda = 1$ if and only if $\beta = 0$.

Denote $\rho=e^\alpha$. In the rare outcome case ($f\to0$ or equivalently $\rho\to 0$), 
applying Taylor's expansion to (\ref{arp_AM}), we have
\begin{align*}
  \lim_{\gamma\rightarrow 0} \frac{\sigma_A^2}{\sigma_M^2} &= 
  \left[ \bigg\{1+\left(\frac{1-\theta}{\theta}\right)\left(\frac{\nu+e^{-\beta}}{\nu+1}\right)\bigg\}^{-1} +  \bigg\{1+\left(\frac{\theta}{1-\theta}\right)\left(\frac{\nu+e^\beta}{\nu+1}\right)\bigg\}^{-1}\right]^{-1}+O(\rho)\\
  &= 1+\frac{\nu\theta(1-\theta)}{(1+\nu)}\frac{(1-e^\beta)^2}{ \bigg\{(1-\theta+e^\beta\theta)^2+\nu e^\beta\bigg\}} + O(\rho)\\
  &= \lambda_0 + O(\rho),
\end{align*}
where $\lambda_0$ is defined in (12) in the main text.
Obviously, $\lambda_0\geq 1$ and $\lambda_0=1$ if and only if $\beta=0$.

Finally, we derive the asymptotic distribution of $\hat\gamma_{AC}$. The logarithm of the likelihood function (4) in the main text can be written as
\begin{align*}
l_{AC}=\sum_{i=1}^n\big[&(\alpha+\beta x_i+\gamma g_i)d_i-\log(1+\exp(\alpha+\beta x_i+\gamma g_i))\\
&+x_i\log\theta+(1-x_i)\log(1-\theta)+g_i\log(\pi)+(1-g_i)\log(1-\pi)\big],
\end{align*}
where $\theta$ is defined in (3) in the main text. It can be easily checked that the regularity conditions required for the asymptotic normality of $\hat\gamma_{AC}$ hold true \citep[see Chapter 5,][]{van2000asymptotic}. The Fisher information matrix is
$$
{I}_{AC}(\textbf{u})=-E\frac{\partial^2 l_{AC}}{\partial \textbf{u}\partial \textbf{u}^T},
$$
where $\textbf{u}=(\alpha, \beta, \gamma, \pi)^T$.
It is easy to derive that
$$
I_{AC}(\textbf{u}) =
\begin{bmatrix}
a & b & c & 0\\
b & b & d & 0\\
c & d & c & 0\\
0 & 0 & 0 & t
\end{bmatrix} + g \frac{\partial \theta}{\partial \textbf{u}}  \frac{\partial \theta}{\partial \textbf{u}^T} +h\frac{\partial^2 \theta}{\partial \textbf{u}\partial \textbf{u}^T},
$$
where
$$
a = E\left(n_{+11}\frac{e^{\alpha+\beta+\gamma}}{(1+e^{\alpha+\beta+\gamma})^2}+n_{+10}\frac{e^{\alpha+\beta}}{(1+e^{\alpha+\beta})^2}+n_{+01}
\frac{e^{\alpha+\gamma}}{(1+e^{\alpha+\gamma})^2}+n_{+00}\frac{e^{\alpha}}{(1+e^{\alpha})^2}\right),
$$
$$
b = E\left(n_{+11}\frac{e^{\alpha+\beta+\gamma}}{(1+e^{\alpha+\beta+\gamma)^2}}+n_{+10}\frac{e^{\alpha+\beta}}{(1+e^{\alpha+\beta})^2}\right),
$$
$$
c = E\left(n_{+11}\frac{e^{\alpha+\beta+\gamma}}{(1+e^{\alpha+\beta+\gamma)^2}}+n_{+01}\frac{e^{\alpha+\gamma}}{(1+e^{\alpha+\gamma})^2} \right),
$$
$$
d = E\left(n_{11+}\frac{e^{\alpha+\beta+\gamma}}{(1+e^{\alpha+\beta+\gamma)^2}}\right),
$$
$$t = E\left(\frac{n_{+11}+n_{+01}}{\pi^2}+\frac{n_{+10}+n_{+00}}{(1-\pi)^2}\right),
$$
$$
g = E\left(\frac{n_{+11}+n_{+10}}{\theta^2}+\frac{n_{+01}+n_{+00}}{(1-\theta)^2}\right),
$$
$$
h = E\left(\frac{n_{+01}+n_{+00}}{1-\theta}-\frac{n_{+11}+n_{+10}}{\theta} \right).
$$
Since
$$E(n_{+ij}) = n_{1++}p_{1ij}+n_{0++}p_{0ij}=\frac{n}{1+\nu}\left(\nu p_{1ij}+ p_{0ij}\right),$$
$$p_{1ij} = \pr(X=i,E=j\mid D=1) =  (p_{ij}\theta^i(1-\theta)^{1-i}\pi^j(1-\pi)^{1-j})/f,$$
$$p_{0ij} = pr(X=i,E=j\mid D=0) = (q_{ij}\theta^i(1-\theta)^{1-i}\pi^j(1-\pi)^{1-j})/(1-f),$$
we have that
$$
\lim_{\gamma\rightarrow 0}\lim_{f\rightarrow 0} a = \frac{e^{\alpha+\beta }n\theta}{1+\nu}\left(\frac{e^\beta \nu}{e^\beta \theta +1-\theta}+1\right)+\frac{e^\alpha n(1-\theta)}{1+\nu}\left(\frac{\nu}{e^\beta \theta +1-\theta}\right),
$$
$$
\lim_{\gamma\rightarrow 0}\lim_{f\rightarrow 0} b = \frac{e^{\alpha+\beta }n\theta}{1+\nu}\left(\frac{e^\beta \nu}{e^\beta \theta +1-\theta}+1\right),
$$
$$
\lim_{\gamma\rightarrow 0}\lim_{f\rightarrow 0} c =\frac{e^{\alpha+\beta }n\theta\pi}{1+\nu}\left(\frac{e^\beta \nu}{e^\beta \theta +1-\theta}+1\right)+\frac{e^\alpha n(1-\theta)\pi}{1+\nu}\left(\frac{\nu}{e^\beta \theta +1-\theta}\right),
$$
$$
\lim_{\gamma\rightarrow 0}\lim_{f\rightarrow 0} d =\frac{e^{\alpha+\beta }n\theta\pi}{1+\nu}\left(\frac{e^\beta \nu}{e^\beta \theta +1-\theta}+1\right),
$$
$$
\lim_{\gamma\rightarrow 0}\lim_{f\rightarrow 0} t =\frac{n}{\pi(1-\pi)},
$$
$$
\lim_{\gamma\rightarrow 0}\lim_{f\rightarrow 0} g =\frac{n}{\theta(1+\nu)}\left(\frac{e^\beta \nu}{e^\beta\theta+1-\theta}+1\right)+\frac{n}{(1-\theta)(1+\nu)}\left(\frac{\nu}{e^\beta\theta+1-\theta}+1\right),
$$
and
$$
\lim_{\gamma\rightarrow 0}\lim_{f\rightarrow 0} h = \frac{n\nu(1-e^\beta)}{(1+\nu)(e^\beta\theta+1-\theta)}.
$$
The standard likelihood theory gives that
\begin{equation}
n^{1/2}(\hat{\gamma}_{AC}-\gamma)\rightarrow N(0,\sigma_{AC}^2)\text{ in distribution},
\end{equation}
where
\begin{equation}\label{sigmaAC}
\sigma_{AC}^2=n (I_{AC})^{-1}_{33}.
\end{equation}
After tedious symbolic algebra using the software Mathematica, we have that
\begin{equation}
\lim_{\gamma\rightarrow 0}\lim_{f\rightarrow 0}n
(I_{AC})^{-1}_{33}=\sigma^2_0,
\end{equation}
where $\sigma_0^2=(2+\nu+1/\nu)/\{\pi(1-\pi)\}$. That is,
\begin{equation}\label{sigmaAC0}{}
\sigma_{AC}^2\to \sigma_0^2 \mbox{ as $f\to0$ and $\gamma\to0$}.
\end{equation}

\section{Restating Pitman's asymptotic relative efficiency}\label{sec:define_pitman}
We restate Pitman's asymptotic relative efficiency \citep{pitman1979, serfling2009} below to facilitate our discussion in the main context.
\begin{definition}
    Consider the problem of testing null hypothesis $H_0:\gamma=0$ against the alternative hypothesis $\gamma\not=0$. For a sequence of test statistics indexed by sample size $n$, $T=\{T_n\}$, suppose that (i) there exist non-random variates $\mu_n(\gamma)$ and $\sigma_n(\gamma)$ such that $n^{1/2}(T_n-\mu_n(\gamma))/\sigma_n(\gamma)$ converges in distribution to the standard normal distribution as $n \rightarrow \infty$ under the contiguous alternative hypothesis $H_1:\gamma=cn^{-1/2}$, (ii) $\mu_n(\gamma)$ has a continuous derivative $\mu_n'(\gamma)$ in a neighbourhood of 0, and (iii) $\sigma_n(\gamma)$ is continuous at $0$. Then $n^{1/2}\sigma_n(0)/\mu_n'(0)$ converges to some constant as $n\rightarrow \infty$. Let $\kappa_A$ and $\kappa_B$ denote such constants corresponding to test statistic sequences $T_A$ and $T_B$, respectively. Pitman's asymptotic relative efficiency of $T_A$ to $T_B$ is defined as $e_P(T_A, T_B) = \left({\kappa_B}/{\kappa_A}\right)^2$.
\end{definition}

\section{Proof of Theorem~1 and Corollary~3}\label{sec:proof_theorem1_coro3}

Adopting the previous notations
\begin{equation}
\rho=e^\alpha,~ b_1=e^\beta(1-\theta)+\theta, \mbox{ and } b_2={e^\beta}/(e^\beta\theta-\theta+1),
\end{equation}
then for $\delta$ defined in (\ref{delta_sup}) we have that
\begin{align}
\lim_{\gamma\rightarrow 0}\frac{d(\gamma+\delta)}{d\gamma}
&= \lim_{\gamma\rightarrow0} \bigg[1+\frac{d}{d\gamma}\log\bigg\{ 1 + \frac{e^\alpha(b_1 - b_2)(1-e^\gamma)}{(1+e^{\alpha+\gamma} b_1)(1+e^\alpha b_2)}\bigg\}\bigg]\nonumber\\
&=1-\frac{(b_1-b_2)\rho}{(1+b_1\rho)(1+b_2\rho)}\nonumber\\
&=\frac{b_1b_2\rho^2+2b_2\rho+1}{b_1b_2\rho^2+(b_1+b_2)\rho+1}.\label{partial_bias}
\end{align}

By (\ref{partial_bias}) and Lemma~2, Pitman's asymptotic relative efficiency of \textsc{Mar} to \textsc{Adj} is equal to
\begin{align*}
  e_P(\hat{\gamma}_M,\hat{\gamma}_A)&= \bigg\{\lim_{\gamma \rightarrow 0}\left(\frac{d(\gamma+\delta)/{d\gamma}}{d\gamma/d\gamma}\right)\bigg\}^2 \bigg\{\lim_{\gamma\rightarrow 0}\frac{\mathrm{var}(\hat{\gamma}_A)}{\mathrm{var}(\hat{\gamma}_M)}\bigg\}\\
  &=  \bigg\{\frac{b_1b_2\rho^2+2b_2\rho+1}{b_1b_2\rho^2+(b_1+b_2)\rho+1}\bigg\}^2 \lambda.
\end{align*}

In the rare outcome situation ($f\to0$ or equivalently $\rho\to0$), applying Taylor’s expansion to (\ref{partial_bias}), we have
\begin{equation}
  \lim_{\gamma\rightarrow 0}\frac{d(\gamma+\delta)}{d\gamma}=1-(b_1-b_2)\rho+(b_1^2-b_2^2)\rho^2+O(\rho^3).
\end{equation}
Consequently,
\begin{equation}\label{tmp}
\left(\lim_{\gamma\rightarrow 0} \frac{d(\gamma+\delta)}{d\gamma}\right)^2 = 1-2(b_1-b_2)\rho +  \{2(b_1^2-b_2^2)+(b_1-b_2)^2\}\rho^2+O(\rho^3).
\end{equation}
By (\ref{tmp}) and Corollary~2, when the outcome is rare, as indicated by a small $\rho,$ Pitman's asymptotic relative efficiency of \textsc{Mar} to \textsc{Adj} is equal to
\begin{align*}
e_P(\hat{\gamma}_M,\hat{\gamma}_A)&= \bigg\{\lim_{\gamma \rightarrow 0}\left(\frac{d(\gamma+\delta)/{d\gamma}}{d\gamma/d\gamma}\right)\bigg\}^2 \bigg\{\lim_{\gamma\rightarrow 0}\frac{\mathrm{var}(\hat{\gamma}_A)}{\mathrm{var}(\hat{\gamma}_M)}\bigg\}\\
&=\left(1-2(b_1-b_2)\rho+O(\rho^2)\right)\left(\lambda_0 +O(\rho)\right)\\
&=\lambda_0 + O(\rho),
\end{align*}
where $\lambda_0$ is defined in (12) of the main text.

\section{Proof of Theorem 2}\label{sec:proof_theorem2}

We adopt the notations in the proof of Lemma 2:
$$
\sigma_{AC}^2=\lim_{n\rightarrow \infty}
\mathrm{var}(n^{1/2}\hat{\gamma}_{AC}),\quad\sigma_M^2=\lim_{n\rightarrow \infty}\mathrm{var}(n^{1/2}\hat{\gamma}_M).
$$
The second-order Taylor expansion of $\sigma_{AC}^2$ with respect to $f$ is
$$
\sigma_{AC}^2=\sigma_{AC}^2|_{f=0}+\frac{\partial }{\partial f}\sigma_{AC}^2\Big|_{f=0}\times f+\frac{1}{2}\frac{\partial^2}{\partial f^2} \sigma_{AC}^2\Big|_{f=0}\times f^2+O(f^3),
$$
so that
\begin{align}
&e_P(\hat{\gamma}_M,\hat{\gamma}_{AC})\nonumber\\
=&\left(\lim_{\gamma\rightarrow 0} \frac{d(\gamma+\delta)/d\gamma}{d\gamma/{d\gamma}}\right) ^2 \times \lim_{\gamma\rightarrow 0}\frac{\mathrm{var}(\hat{\gamma}_{AC})}{\mathrm{var}(\hat{\gamma}_M)} \nonumber\\
=& \bigg\{1-2(b_1-b_2)\rho+\left\{2(b_1^2-b_2^2)+(b_1-b_2)^2\right\}\rho^2+O(\rho^3)\bigg\}\nonumber\\
&\times\lim_{\gamma\to0} \left(\frac{\sigma_{AC}^2|_{f=0}}{\sigma_M^2}+\frac{\frac{\partial}{\partial f} \sigma_{AC}^2|_{f=0}}{\sigma_M^2}\times f+\frac{1}{2}\frac{\frac{\partial^2}{\partial f^2} \sigma_{AC}^2|_{f=0}}{\sigma_M^2}\times f^2+O(f^3)\right)\label{eq0}
\end{align}
by (\ref{tmp}).
Symbolic algebra with the software Mathematica gives
\begin{equation}\label{sigmaAC1}
\frac{\partial}{\partial f} \sigma_{AC}^2\Big|_{f=0,\gamma=0}=\frac{2(2+\nu+1/\nu) (\theta -1) \theta  \left(e^{\beta }-1\right)^2}{\left(\theta  \left(e^{\beta }-1\right)+1\right)^2 (\pi -1) \pi}
\end{equation}
and
\begin{align}
&\frac{\partial^2}{\partial f^2} \sigma_{AC}^2\Big|_{f=0,\gamma=0}\nonumber\\
=&-\big[e^{\beta } \nu^2+\theta ^2 (e^{\beta }-1 )^2 (2 \nu+1)-\theta   (e^{\beta }-1\ )  \{ (e^{\beta }-3 ) \nu-2\}+5 e^{\beta } \nu+\nu+1 \big]\nonumber\\
&\times\frac{2 (\theta -1) \theta  \left(e^{\beta }-1\right)^2 (\nu+1)^2 }{\left(\theta  \left(e^{\beta }-1\right)+1\right)^4 (\pi -1) \pi \nu^2}.\label{sigmaAC2}
\end{align}
If $\gamma=0$, then the outcome prevalence can be expressed as
$$
f|_{\gamma=0}=\frac{e^{\alpha+\beta}}{1+e^{\alpha+\beta}}\theta+\frac{e^\alpha}{1+e^\alpha}(1-\theta) ,\label{f1}
$$
so that
\begin{equation}\label{f}
f|_{\gamma=0}=(e^\beta\theta-\theta+1)\rho-(e^{2\beta}\theta-\theta+1)\rho^2 + O(\rho^3).
\end{equation}
It follows from (\ref{sigmaM0}) and (\ref{sigmaAC0}) that
\begin{equation}\label{eq1}
\sigma_M^2|_{\gamma= 0} = \sigma_0^2\mbox{ and }
\sigma_{AC}^2|_{f=0,\gamma=0}={\sigma_0^2}.
\end{equation}
Furthermore, from (\ref{sigmaAC1})-(\ref{f}), we have
\begin{align}\label{eq2}
&\frac{\frac{\partial}{\partial f} \sigma_{AC}^2|_{f=0,\gamma=0}}{\sigma_0^2}\times f\nonumber\\
&=\frac{2(1-\theta) \theta  \left(e^{\beta }-1\right)^2}{\theta  \left(e^{\beta }-1\right)+1}\rho+\frac{2(1-\theta)\theta(e^\beta-1)^2(\theta-1-e^{2\beta}\theta)}{(\theta(e^\beta-1)+1)^2}\rho^2+O(\rho^3)\nonumber\\
&=2(b_1-b_2)\rho - \frac{2(1-\theta)\theta(e^\beta-1)^2(1+(e^{2\beta}-1)\theta)}{(\theta(e^\beta-1)+1)^2}\rho^2 +O(\rho^3)
\end{align}
and
\begin{align}
&\frac{1}{2}\frac{\frac{\partial^2}{\partial f^2} \sigma_{AC}^2|_{f=0}}{\sigma_0^2}\times f^2 \nonumber\\
=&\big[(e^{\beta } \nu^2+\theta ^2 (e^{\beta }-1)^2 (2 \nu+1)-\theta  (e^{\beta }-1) \{(e^{\beta }-3) \nu-2\}+5 e^{\beta } \nu+\nu+1\big]\nonumber\\
&\times\frac{(\theta -1) \theta  (e^{\beta }-1)^2 }{\{\theta (e^{\beta }-1)+1\}^2 \nu}\rho^2+O(\rho^3). \label{eq3}
\end{align}
By equations (\ref{eq0}) and (\ref{eq1})-(\ref{eq3}), we have
\begin{align*}
e_P(\hat{\gamma}_M,\hat{\gamma}_{AC})=1+\tau\rho^2+O(\rho^3),
\end{align*}
where
\begin{equation}\label{tau}
    \tau = -\frac{(1-\theta) \theta  (e^{\beta }-1)^2  \{(1+1/\nu) [(\theta (e^{\beta }-1)+1)^2+e^{\beta } \nu]+2(1+(e^{2\beta}-1)\theta)\}}{ \{\theta  \left(e^{\beta }-1\right)+1\}^2}.
\end{equation}
Obviously, $\tau\leq0$ and the equality holds if and only if $\beta=0$.

\section{Robustness of \textsc{AdjCon} with respect to  prevalence setting}\label{sec:proof_theorem3}
\subsection{Notations and preliminary results}
Let $\boldsymbol s = (\beta,\gamma,\theta,\pi)^\top$ denote unknown model parameters. Denote by $f_0$ the true outcome prevalence, which is incorrectly specified as $f_1$ in \textsc{AdjCon}. In this section, we show that \textsc{AdjCon} is robust with respect to the misspecification of the outcome prevalence. Let $B$ be the domain of $(\beta,\gamma,\theta,\pi)$: $B=\{(\beta,\gamma,\theta,\pi)\mid \beta$ and $\gamma$ are bounded away from infinity, $\theta$ and $\pi$ are bounded away from zero and one$\}$.

Assume that $f_0,f_1\in (0, 1-\epsilon]$ for some give $\epsilon>0$, which is easily hold in practice. Assume that $\bfs_f^*=(\beta^*,\gamma^*,\theta^*,\pi^*)\in B$ for any $f\in(0,1-\epsilon]$. The log-likelihood is
\begin{align*}
    l_f(\bfs) &=(\alpha+\beta X+\gamma E)D-\log(1+\exp(\alpha+\beta X+\gamma E))+\\
    &\quad X\log\theta+(1-X)\log(1-\theta)+E\log(\pi)+(1-E)\log(1-\pi),
\end{align*}
subject to the prevalence constraint 
\begin{equation}\label{eq:f_constraint}
  f = \sum_{i=0}^1\sum_{j=0}^1 p(D=1\mid X=i,E=j)p(X=i)p(E=j).
\end{equation}

We will show that both $\bfs_{f_1}^*$ and the corresponding asymptotic covariance matrix $\Sigma_{f_1}(\bfs_{f_1}^*)$ are Lipschitz continuous with respect to $f_1$, that is $$\Vert \bfs_{f_1}^* - \bfs_{f_0}^*\Vert = C_1\vert f_1 - f_0\vert$$
and
\begin{equation}
  \Vert\Sigma_{f_1}(\bfs_{f_1}^*) - \Sigma_{f_0}(\bfs_{f_0}^*)\Vert = C_2\vert f_1-f_0\vert,
\end{equation}
where 
\begin{equation}\label{eq:bfs01}
  \bfs_{f_0}^* = \arg\max_\bfs E_{f_0}\{l_{f_0}(\bfs)\},\quad \bfs_{f_1}^* = \arg\max_\bfs E_{f_0}\{l_{f_1}(\bfs)\},
\end{equation}
and $C_1,C_2$ are independent of $f_0$ and $f_1.$

We define the following quantities that will be used in the proof of Lemma \ref{lemma1}. Let 
$$M_1(\bfs) = e^{\beta+\gamma}\theta\pi+e^{\beta}\theta(1-\pi)+e^{\gamma}(1-\theta)\pi+(1-\theta)(1-\pi),$$ then
$$
0<m_1=\min_{\bfs\in B}\{e^{\beta+\gamma},e^{\beta},e^{\gamma},1\}\leq M_1(\bfs)\leq \max_{\bfs\in B}\{e^{\beta+\gamma},e^{\beta},e^{\gamma},1\} = M_1.
$$
Let 
$$M_2(\bfs) = e^{-\beta-\gamma}\theta\pi+e^{-\beta}\theta(1-\pi)+e^{-\gamma}(1-\theta)\pi+(1-\theta)(1-\pi),$$ then
$$
0<m_2=\min_{\bfs\in B}\{e^{-\beta-\gamma},e^{-\beta},e^{-\gamma},1\}\leq M_2(\bfs)\leq \max_{\bfs\in B}\{e^{-\beta-\gamma},e^{-\beta},e^{-\gamma},1\} = M_2.
$$

The following lemma presents a decomposition of the intercept parameter $\alpha$ under the prevalence constraint.
\begin{lemma}\label{lemma1_sup}
  Assume $f\in(0,1-\epsilon]$ for some $\epsilon>0$,  $\beta$ and $\gamma$ are bounded away from infinity and $\theta$ and $\pi$ are bounded away from zero and one. Denote  $\bfs=(\beta,\gamma,\theta,\pi)$. The intercept $\alpha$, as a function of $f$ and $\bfs$ due to constraint
  \begin{equation}\label{constrain_f}
  f = \sum_{i=0}^1\sum_{j=0}^1\pr(D=1\mid X=i,E=j)\pr(X=i)\pr(E=j),
  \end{equation}
  can be decomposed into two parts: $$\alpha(f,\bfs) = \alpha_1(f) + \alpha_2(f,\bfs),$$ where $\alpha_2(f,\bfs)$ is Lipschitz continuous with respect to $f$.
\end{lemma}

\begin{proof}
Denote $\rho(f,\bfs) = \exp(\alpha(f,\bfs))$ and rewrite the constraint \eqref{constrain_f}:
  \begin{align}
      f=& F(\alpha,\bfs) = \sum_{i=0}^1\sum_{j=0}^1 p(D=1|X=i,E=j)p(X=i)p(E=j)\notag\\
      =&\frac{\exp(\alpha+\beta+\gamma)\theta\pi}{1+\exp(\alpha+\beta+\gamma)} + \frac{\exp(\alpha+\beta)\theta(1-\pi)}{1+\exp(\alpha+\beta)}+\frac{\exp(\alpha+\gamma)(1-\theta)\pi}{1+\exp(\alpha+\gamma)}\notag\\
      &+\frac{\exp(\alpha)(1-\theta)(1-\pi)}{1+\exp(\alpha)}\notag\\
      =& \frac{\rho}{1+\rho}\bigg(\frac{(1+\rho)e^{\beta+\gamma}\theta\pi}{1+\rho e^{\beta+\gamma}}+\frac{(1+\rho)e^{\beta}\theta(1-\pi)}{1+\rho e^{\beta}}+\frac{(1+\rho)e^{\gamma}(1-\theta)\pi}{1+\rho e^{\gamma}}+(1-\theta)(1-\pi)\bigg)\notag\\
      =& \frac{\rho}{1+\rho} C'(\rho,\bfs),\label{eq:f_constrain}
  \end{align} 
where as $\rho$ ranges from 0 to $\infty$, $C'(\rho,\bfs)$ ranges from $M_1(\bfs)$ to $1$. Similarly, 
\begin{align}
1-f = &\frac{1}{1+\rho}\bigg[\frac{(1+\rho)\theta\pi}{1+\rho e^{\beta+\gamma}}+\frac{(1+\rho)\theta(1-\pi)}{1+\rho e^\beta}+\frac{(1+\rho)(1-\theta)\pi}{1+\rho e^\gamma}+(1-\theta)(1-\pi)\bigg] \notag\\
=& \frac{1}{1+\rho} C''(\rho,\bfs),\label{eq:1-f_constrain}
\end{align}
where as $\rho$ ranges from 0 to $\infty$, $C''(\rho,\bfs)$ ranges from $1$ to $M_2(\bfs)$.
Combining \eqref{eq:f_constrain} and \eqref{eq:1-f_constrain}, we have 
\begin{align*}
  \alpha(f,\bfs) &= \{\log f - \log(1-f)\} + \{\log C''(\rho(f,\bfs),\bfs) - \log C'(\rho(f,\bfs),\bfs)\}\\
  &= \alpha_1(f) + \alpha_2(f,\bfs).
\end{align*}
In what follows, we show that
\begin{equation}\label{lip1}
\alpha_2(f,\bfs) = \log C''(\rho(f,\bfs),\bfs) - \log C'(\rho(f,\bfs),\bfs)
\end{equation}
is Lipschitz continuous with respect to $f$.
First,
    \begin{align}\label{lip2}
        &\bigg\vert\frac{\partial\log C''(\rho,\bfs)}{\partial f}\bigg\vert= \bigg\vert\frac{1}{C''(\rho,\bfs)}\frac{\partial C''(\rho,\bfs)}{\partial \rho}\frac{\partial \rho}{\partial f}\bigg\vert\notag\\
        =& \bigg\vert\frac{1}{C''(\rho,\bfs)}\frac{\frac{\theta\pi(1-e^{\beta+\gamma})}{(1+\rho e^{\beta+\gamma})^2}+\frac{\theta(1-\pi)(1-e^{\beta})}{(1+\rho e^{\beta})^2}+\frac{(1-\theta)\pi(1-e^{\gamma})}{(1+\rho e^{\gamma})^2}}{\frac{e^{\beta+\gamma}\theta\pi}{(1+\rho e^{\beta+\gamma})^2}+\frac{e^{\beta}\theta(1-\pi)}{(1+\rho e^{\beta})^2}+\frac{e^{\gamma}(1-\theta)\pi}{(1+\rho e^{\gamma})^2}+\frac{(1-\theta)(1-\pi)}{(1+\rho)^2}}\bigg\vert\notag\\
        =& \frac{1}{C''(\rho,\bfs)}\frac{1}{C'''(\rho,\bfs)}\bigg\vert\frac{(1+\rho)^2\theta\pi}{(1+\rho e^{\beta+\gamma})^2}+\frac{(1+\rho)^2\theta(1-\pi)}{(1+\rho e^{\beta})^2}+\frac{(1+\rho)^2(1-\theta)\pi}{(1+\rho e^{\gamma})^2}\notag\\
        &+(1-\theta)(1-\pi)-C'''(\rho,\bfs)\bigg\vert\notag\\
        \leq& \frac{1}{m_2}\frac{1}{\min\{m_1,m_2\}}(M_2^2+M_1+M_2),
    \end{align}
where 
\begin{align*}
  C'''(\rho,\bfs)=\bigg\{&\frac{(1+\rho)^2e^{\beta+\gamma}\theta\pi}{(1+\rho e^{\beta+\gamma})^2}+\frac{(1+\rho)^2e^{\beta}\theta(1-\pi)}{(1+\rho e^{\beta})^2}+\frac{(1+\rho)^2e^{\gamma}(1-\theta)\pi}{(1+\rho e^{\gamma})^2}\\
  &+(1-\theta)(1-\pi)\bigg\}.
\end{align*}
As $\rho$ ranges from 0 to $\infty$, $C'''(\rho,\bfs)$ ranges from $M_1(\bfs)$ to $M_2(\bfs).$
Second,
        \begin{align}\label{lip3}
            &\bigg\vert\frac{\partial\log C'(\rho,\bfs)}{\partial f}\bigg\vert= \bigg\vert\frac{1}{C'(\rho,\bfs)}\frac{\partial C'(\rho,\bfs)}{\partial \rho}\frac{\partial \rho}{\partial f}\bigg\vert\notag\\
            =& \bigg\vert\frac{1}{C'(\rho,\bfs)}\frac{\frac{e^{\beta+\gamma}\theta\pi(1-e^{\beta+\gamma})}{(1+\rho e^{\beta+\gamma})^2}+\frac{e^\beta\theta(1-\pi)(1-e^{\beta})}{(1+\rho e^{\beta})^2}+\frac{e^\gamma(1-\theta)\pi(1-e^{\gamma})}{(1+\rho e^{\gamma})^2}}{\frac{e^{\beta+\gamma}\theta\pi}{(1+\rho e^{\beta+\gamma})^2}+\frac{e^{\beta}\theta(1-\pi)}{(1+\rho e^{\beta})^2}+\frac{e^{\gamma}(1-\theta)\pi}{(1+\rho e^{\gamma})^2}+\frac{(1-\theta)(1-\pi)}{(1+\rho)^2}}\bigg\vert\notag\\
            =& \frac{1}{C'(\rho,\bfs)}\frac{1}{C'''(\rho,\bfs)}\bigg\vert C'''(\rho,\bfs)
            -\bigg[\frac{e^{2\beta+2\gamma}(1+\rho)^2\theta\pi}{(1+\rho e^{\beta+\gamma})^2}+\frac{e^{2\beta}(1+\rho)^2\theta(1-\pi)}{(1+\rho e^{\beta})^2}\notag\\
            &\quad\quad\quad\quad\quad\quad\quad\quad\quad\quad\quad\quad\quad+\frac{e^{2\gamma}(1+\rho)^2(1-\theta)\pi}{(1+\rho e^{\gamma})^2}+(1-\theta)(1-\pi)\bigg]\bigg\vert\notag\\
            \leq& \frac{1}{m_1}\frac{1}{\min\{m_1,m_2\}}(M_1^2+M_1+M_2).
        \end{align}
        It follows from \eqref{lip1}-\eqref{lip3} that $\alpha_2(f,\bfs)$ is Lipschitz continuous with respect to $f\in (0,1-\epsilon]$ for some $\epsilon>0$ and any $\bfs\in B$. Denote the Lipschitz constant by 
        \begin{equation}\label{L_C}
        L_C=\frac1{\min\{m_1,m_2\}}\bigg(\frac{1}{m_2}(M_2^2+M_1+M_2)+\frac{1}{m_1}(M_1^2+M_1+M_2)\bigg).
        \end{equation}
\end{proof}
\subsection{Proof of Theorem 3: Part I (Lipschitz continuity of $\bfs_{f_1}^*$)}\label{sec:7.2}

Before proving $\Vert \bfs_{f_1}^* - \bfs_{f_0}^*\Vert \leq C_1\vert f_1 - f_0\vert$, we first prove
\begin{equation}
     E_{f_0}[l_{f_0}(\bfs_{f_0}^*) - l_{f_0}(\bfs_{f_1}^*)] \leq C|f_1 - f_0|.
\end{equation}
Since $ E_{f_0}[l_{ f_1}(\bfs_{f_0}^*)] \leq  E_{f_0}[l_{ f_1}(\bfs_{ f_1}^*)]$, we have
\begin{equation}
     E_{f_0}[l_{f_0}(\bfs_{f_0}^*) + (l_{f_1}(\bfs_{f_0}^*)-l_{f_0}(\bfs_{f_0}^*))] \leq  E_{f_0}[l_{f_0}(\bfs_{ f_1}^*)+(l_{f_1}(\bfs_{f_1}^*) - l_{f_0}(\bfs_{f_1}^*))].
\end{equation}
Thus,
$$
    0\leq E_{f_0}[l_{f_0}(\bfs_{f_0}^*) - l_{f_0}(\bfs_{f_1}^*)]\leq  E_{f_0}[(l_{f_1}(\bfs_{f_1}^*)-l_{f_0}(\bfs_{f_1}^*))] -  E_{f_0}[(l_{f_1}(\bfs_{f_0}^*)-l_{f_0}(\bfs_{f_0}^*))], 
$$
where the first equality holds according to the definition of \eqref{eq:bfs01}. We need only to prove that the right-hand side of the above inequality is Lipschitz continuous with respect to $f$.

For any $\bfs\in B$, we have
    \begin{align*}
         E_{f_0}&[l_{f_1}(\bfs) - l_{f_0}(\bfs)] = E_{f_0}\bigg[\frac{\partial l_f(\bfs)}{\partial f}\bigg\vert_{f=f^*}\bigg](f_1-f_0)\\
        &=  E_{f_0}\bigg[(\alpha_1 - \alpha_0)D - \log\frac{1+\exp(\alpha_1+\beta X+\gamma E)}{1 + \exp( \alpha_0+\beta X+\gamma E)}\bigg] \\
        &= \nu/(1+\nu)(\alpha_1 - \alpha_0) -  E_{f_0}\bigg[\log\frac{1+\exp(\alpha_1+\beta X+\gamma E)}{1 + \exp( \alpha_0+\beta X+\gamma E)}\bigg],
    \end{align*}
so that
\begin{align}
&E_{f_0}[(l_{f_1}(\bfs_{f_1}^*)-l_{f_0}(\bfs_{f_1}^*))] -  E_{f_0}[(l_{f_1}(\bfs_{f_0}^*)-l_{f_0}(\bfs_{f_0}^*))]\notag\\
=
    & E_{f_0}\bigg[\nu/(1+\nu)(\alpha(f_1,\bfs_{f_1}^*) - \alpha(f_0,\bfs_{f_1}^*)) - \log\frac{1+\exp(\alpha(f_1,\bfs_{f_1}^*)+\beta X+\gamma E)}{1 + \exp( \alpha(f_0,\bfs_{f_1}^*)+\beta X+\gamma E)}\bigg]\notag\\
    & -E_{f_0}\bigg[\nu/(1+\nu)(\alpha(f_1,\bfs_{f_0}^*) - \alpha(f_0,\bfs_{f_0}^*)) - \log\frac{1+\exp(\alpha(f_1,\bfs_{f_0}^*)+\beta X+\gamma E)}{1 + \exp( \alpha(f_0,\bfs_{f_0}^*)+\beta X+\gamma E)}\bigg]\notag\\
    =&\frac{\nu}{1+\nu}\big([\alpha(f_1,\bfs_{f_1}^*)-\alpha(f_0,\bfs_{f_1}^*)]-[\alpha(f_1,\bfs_{f_0}^*)-\alpha(f_0,\bfs_{f_0}^*)]\big)\notag\\ 
    &-\bigg\{ E_{f_0}\bigg[\frac{\rho_1^*\exp(\beta_1^*X+\gamma_1^*E)}{1+\rho_1^*\exp(\beta_1^*X+\gamma_1^*E)}\bigg]\frac{\partial\alpha(f,\bfs_{f_1}^*)}{\partial f}\bigg\vert_{f=f_1^*} \notag\\
    &-E_{f_0}\bigg[\frac{\rho_0^*\exp(\beta_0^*X+\gamma_0^*E)}{1+\rho_0^*\exp(\beta_0^*X+\gamma_0^*E)}\bigg]\frac{\partial\alpha(f,\bfs_{f_0}^*)}{\partial f}\bigg\vert_{f=f_0^*}\bigg\}(f_1-f_0).\label{eq:diff_eq}
\end{align}

When $0<f_1,f_0<1-\epsilon,\bfs_{f_1}^*,\bfs_{f_0}^*\in B$, in what follows, we show both of the two terms in the righthand side of \eqref{eq:diff_eq} are Lipschitz continuous with respect to $f$. First, according to Lemma~\ref{lemma1_sup},
\begin{equation}
    \alpha(f,\bfs) = \alpha_1(f) + \alpha_2(f,\bfs).
\end{equation}
For the first term in \eqref{eq:diff_eq},
\begin{align*}
    &[\alpha(f_1,\bfs_{f_1}^*)-\alpha(f_0,\bfs_{f_1}^*)]-[\alpha(f_1,\bfs_{f_0}^*)-\alpha(f_0,\bfs_{f_0}^*)]\\
    =&[\alpha(f_1,\bfs_{f_1}^*)-\alpha(f_1,\bfs_{f_0}^*)]-[\alpha(f_0,\bfs_{f_1}^*)-\alpha(f_0,\bfs_{f_0}^*)]\\
    =&[\alpha_2(f_1,\bfs_{f_1}^*) - \alpha_2(f_0,\bfs_{f_1}^*)]-[\alpha_2(f_1,\bfs_{f_0}^*)-\alpha_2(f_0,\bfs_{f_0}^*)]\\
    \leq& 2L_C\vert f_1 - f_0\vert
\end{align*}
holds because $\alpha_2(f,\bfs)$ is Lipschitz continuous with respect to $f$. Next, we show the second term is also Lipschitz continuous with respect to $f$. Note that
\begin{equation}\label{eq:diff_alpha}
\begin{aligned}
    \frac{\partial\alpha(f,\bfs)}{\partial f} &= \bigg\{\frac{\rho}{(1+\rho)^2}\bigg[\frac{(1+\rho)^2e^{\beta+\gamma}\theta\pi}{(1+\rho e^{\beta+\gamma})^2}+\frac{(1+\rho)^2e^{\beta}\theta(1-\pi)}{(1+\rho e^{\beta})^2}+\frac{(1+\rho)^2e^{\gamma}(1-\theta)\pi}{(1+\rho e^{\gamma})^2}\\
    &\quad\quad\quad\quad\quad\quad+(1-\theta)(1-\pi)\bigg]\bigg\}^{-1}\\
    =& \frac{(1+\rho)^2}{\rho C'''(\rho,\bfs)} = \frac{1+\rho}{\rho}\frac{1+\rho}{C'''(\rho,\bfs)}= \frac{1+\rho}{\rho}\frac{C''(\rho,\bfs)}{(1-f)C'''(\rho,\bfs)}.
\end{aligned}
\end{equation}

According to Equation~\eqref{eq:diff_alpha}, the second term is
\begin{align*}
    &\bigg\{ E_{f_0}\bigg[\frac{(1+\rho_1^*)\exp(\beta_1^*X+\gamma_1^*E)}{1+\rho_1^*\exp(\beta_1^*X+\gamma_1^*E)}\bigg]\frac{C''(\rho_1^*,\bfs_{f_1}^*)}{(1-f_1^*)C'''(\rho_1^*,\bfs_{f_1}^*)}\\
    & \quad -E_{f_0}\bigg[\frac{(1+\rho_0^*)\exp(\beta_0^*X+\gamma_0^*E)}{1+\rho_0^*\exp(\beta_0^*X+\gamma_0^*E)}\bigg]\frac{C''(\rho_0^*,\bfs_{f_0}^*)}{(1-f_0^*)C'''(\rho_0^*,\bfs_{f_0}^*)}\bigg\}(f_1-f_0)\\
    \leq& 2L_1 L_\epsilon L_3\vert f_1-f_0\vert,
\end{align*}
where 
$$
L_1 = \max_{\bfs^* \in B}\big\{1,  E_{f_0}\exp(\beta^*X+\gamma^*E)\big\}\leq \max_{\bfs^*\in B}\{1, \exp(\beta^*+\gamma^*),\exp(\beta^*),\exp(\gamma^*)\} = M_1,
$$
$$
L_\epsilon = \max\bigg\{\frac{1}{1-f_1},\frac{1}{1-f_0}\bigg\}\leq \frac{1}{\epsilon},\quad \text{if } f_1,f_0\leq 1-\epsilon,
$$
$$
L_3 = \max_{\rho^*>0,\bfs^*\in B}\frac{C''(\rho^*,\bfs^*)}{C'''(\rho^*,\bfs^*)} = \frac{M_2}{\min\{m_1,m_2\}}.
$$
So we have 
$$
 E_{f_0} \{l_{f_0}(\bfs_{f_0}^*) - l_{f_0}(\bfs_{f_1}^*)\}\leq C\vert f_1-f_0\vert,
$$
where $C = 2\nu L_C/(1+\nu) + 2L_1L_\epsilon L_3$. 

Taylor's expansion of $l_{f_0}(\bfs_{f_1}^*)$ at $\bfs_{f_0}^*$ gives that
\begin{equation}
\begin{aligned}
    l_{f_0}(\bfs_{f_1}^*) = l_{f_0}(\bfs_{f_0}^*) + (\bfs_{f_1}^* - \bfs_{f_0}^*)^\top\frac{\partial l_{f_0}(\bfs)}{\partial\bfs}\bigg\vert_{\bfs=\bfs_{f_0}^*} + (\bfs_{f_1}^* - \bfs_{f_0}^*)^\top\frac{\partial^2 l_{f_0}(\bfs)}{\partial\bfs\partial\bfs^\top}\bigg\vert_{\bfs=\bfs^\prime}(\bfs_{f_1}^*-\bfs_{f_0}^*),
\end{aligned}
\end{equation}
where $\bfs^\prime$ lies in between $\bfs_{f_0}^*$ and $\bfs_{f_1}^*$. Consequently, 
\begin{align}
     E_{f_0} \{l_{f_0}(\bfs_{f_0}^*) - l_{f_0}(\bfs_{f_1}^*)\} &= (\bfs_{f_1}^* - \bfs_{f_0}^*)^\top \bigg\{- E_{f_0}\frac{\partial^2 l_{f_0}(\bfs)}{\partial\bfs\partial\bfs^\top}\bigg\vert_{\bfs=\bfs^\prime}\bigg\}(\bfs_{f_1}^*-\bfs_{f_0}^*)\notag\\
    &\leq C\vert f_1-f_0\vert.\label{eq:s_bound}
\end{align} 
It can be easily show that the matrix $- E_{f_0} \{\partial^2 l_{f_0}(\bfs)/(\partial\bfs\partial\bfs^\top)\vert_{\bfs=\bfs^\prime}\}$ is positive definite. 
Let the corresponding smallest eigenvalue be $\lambda_{min}>0$, then \eqref{eq:s_bound} implies 
\begin{equation}
    \Vert\bfs_{f_1}^*-\bfs_{f_0}^*\Vert \leq \frac{C}{\lambda_{min}}\vert f_1-f_0\vert.
\end{equation}
Finally, let $C_1 = C/\lambda_{\min}$ which is given in Theorem 3 of the paper. 

\subsection{Proof of Theorem 3 Part II (Lipschitz continuity of $\Sigma_{f_1}(\bfs_{f_1}^*)$)}
We prove the asymptotic covariance matrix $\Sigma_{f_1}(\bfs_{f_1}^*)$ is Lipschitz continuous with respect to $f_1$. 
Let $\hat\bfs_{f_1}$ maximize the log-likelihood function
\begin{align*}
        l_{n,f_1}=&\sum_{i=1}^n\Big[( \alpha_1+\beta x_i+\gamma g_i)d_i-\log(1+\exp( \alpha_1+\beta x_i+\gamma g_i))\\
        &\quad\quad+x_i\log\theta+(1-x_i)\log(1-\theta)+g_i\log(\pi)+(1-g_i)\log(1-\pi)\big]
\end{align*}
with the outcome prevalence being specified to be $f_1$.

According to \cite{white1982maximum}, the maximum likelihood estimator $\hat\bfs_{f_1}$ is consistent for $\bfs_{f_1}^*$ and asymptotically normal:
\begin{equation}
    \sqrt{n}(\hat{\bfs}_{ f_1} - \bfs_{f_1}^*)\rightarrow N(0, \Sigma_{ f_1}(\bfs_{f_1}^*)),
\end{equation}
where 
\begin{equation}\label{sigma_f1}
\Sigma_{ f_1}(\bfs_{f_1}^*) = A^{-1}(f_1,\bfs_{f_1}^*)B(f_1,\bfs_{f_1}^*)A^{-1}(f_1,\bfs_{f_1}^*)
\end{equation}
with
\begin{equation}
    \begin{aligned}
        A(f,\bfs) = \frac1n E_{f_0} \bigg\{\frac{\partial^2 l_{n, f}(\bfs)}{\partial \bfs\partial\bfs^\top}\bigg\}\text{ and } B(f,\bfs) = \frac1n E_{f_0} \bigg\{\frac{\partial l_{n, f}(\bfs)}{\partial\bfs}\frac{\partial l_{n, f}(\bfs)}{\partial\bfs^\top}\bigg\}.
    \end{aligned}
\end{equation}
Assume that $A(f,\bfs)$ and $B(f,\bfs)$ have good condition numbers among all $f\in(0,1-\epsilon]$ and $\bfs\in B$. Specifically, $\Vert A(f,\bfs)\Vert \leq \Lambda_A, \Vert A^{-1}(f,\bfs)\Vert \leq \lambda_A$ and $\Vert B(f,\bfs)\Vert \leq \Lambda_B, \Vert B^{-1}(f,\bfs)\Vert \leq \lambda_B$.
Similarly, we have 
\begin{equation}\label{sigma_f0}
\Sigma_{f_0}(\bfs_{f_0}^*) = A^{-1}(f_0,\bfs_{f_0}^*)B(f_0,\bfs_{f_0}^*)A^{-1}(f_0,\bfs_{f_0}^*).
\end{equation}

Note that 
\begin{equation}\label{information}
-A(f_0,\bfs_{f_0}^*) = B(f_0,\bfs_{f_0}^*).
\end{equation}
In order to show $\Vert\Sigma_{f_1}(\bfs_{f_1}^*) - \Sigma_{f_0}(\bfs_{f_0}^*)\Vert \leq C_2\vert f_1-f_0\vert$, we only need to show 
\begin{equation}\label{eq:A_func}
  \Vert A(f_1,\bfs_{f_1}^*)-A(f_0,\bfs_{f_0}^*)\Vert \leq C_A\vert f_1-f_0\vert,
\end{equation} and 
\begin{equation}\label{eq:B_func}
  \Vert B(f_1,\bfs_{f_1}^*) - B(f_0,\bfs_{f_0}^*)\Vert \leq C_B\vert f_1-f_0\vert.
\end{equation}
In fact, according to the Woodbury matrix identity $$(A-B)^{-1} = A^{-1} +A^{-1}B(A-B)^{-1}$$ that 
\begin{equation}
    \begin{aligned}
        A^{-1}(f_0,\bfs_{f_0}^*) &= (A(f_1,\bfs_{f_1}^*) - [A(f_1,\bfs_{f_1}^*) - A(f_0,\bfs_{f_0}^*)])^{-1}\\
        &= A^{-1}(f_1,\bfs_{f_1}^*) + A^{-1}(f_1,\bfs_{f_1}^*)[A(f_1,\bfs_{f_1}^*) - A(f_0,\bfs_{f_0}^*)]A^{-1}(f_0,\bfs_{f_0}^*)
    \end{aligned}  
\end{equation}
Consequently,
\begin{equation}\label{BA}
    \begin{aligned}
        \Vert A^{-1}(f_1,\bfs_{f_1}^*) - A^{-1}(f_0,\bfs_{f_0}^*)\Vert &\leq \Vert A^{-1}(f_1,\bfs_{f_1}^*)\Vert \Vert A(f_1,\bfs_{f_1}^*) - A(f_0,\bfs_{f_0}^*)\Vert \Vert A^{-1}(f_0,\bfs_{f_0}^*)\Vert\\
        &\leq \lambda_A^2 C_A\vert f_1-f_0\vert.
    \end{aligned}
\end{equation}
By \eqref{sigma_f1}, \eqref{sigma_f0}, \eqref{information}, \eqref{eq:A_func} and \eqref{BA}, we have
\begin{equation}\label{eq:var_target}
    \begin{aligned}
        \Vert &\Sigma_{f_1}(\bfs_{f_1}^*) - \Sigma_{f_0}(\bfs_{f_0}^*)\Vert \\
        =&\Vert A^{-1}(f_1,\bfs_{f_1}^*)B(f_1,\bfs_{f_1}^*)A^{-1}(f_1,\bfs_{f_1}^*) - A^{-1}(f_0,\bfs_{f_0}^*)B(f_0,\bfs_{f_0}^*)A^{-1}(f_0,\bfs_{f_0}^*)\Vert\\
        =& \Vert A^{-1}(f_1,\bfs_{f_1}^*)B(f_1,\bfs_{f_1}^*)A^{-1}(f_1,\bfs_{f_1}^*) - A^{-1}(f_1,\bfs_{f_1}^*)B(f_0,\bfs_{f_0}^*)A^{-1}(f_1,\bfs_{f_1}^*)\\
        & + A^{-1}(f_1,\bfs_{f_1}^*)B(f_0,\bfs_{f_0}^*)A^{-1}(f_1,\bfs_{f_1}^*)- A^{-1}(f_0,\bfs_{f_0}^*)B(f_0,\bfs_{f_0}^*)A^{-1}(f_0,\bfs_{f_0}^*)\Vert\\
        \leq & \Vert A^{-1}(f_1,\bfs_{f_1}^*)\Vert \Vert B(f_1,\bfs_{f_1}^*) - B(f_0,\bfs_{f_0}^*)\Vert \Vert A^{-1}(f_1,\bfs_{f_1}^*)\Vert \\
        &+ \Vert A^{-1}(f_1,\bfs_{f_1}^*) - A^{-1}(f_0,\bfs_{f_0}^*)\Vert \Vert B(f_0,\bfs_{f_0}^*) \Vert \Vert A^{-1}(f_1,\bfs_{f_1}^*)+A^{-1}(f_0,\bfs_{f_0}^*)\Vert\\
        \leq & (\lambda_A^2 C_B + 2\lambda_A^3C_A\Lambda_B)\vert f_1-f_0\vert.
    \end{aligned}
\end{equation}
Here we let $C_2 = \lambda_A^2 C_B + 2\lambda_A^3C_A\Lambda_B$ which is given in Theorem 3 in the main text.

Now we prove Equations \eqref{eq:A_func} and \eqref{eq:B_func}. Given the prevalence constraint \eqref{eq:f_constrain}, i.e., $f=F(\alpha,\bfs)$, we have
$$
    \frac{\partial\alpha(f,\bfs)}{\partial \bfs} = -\frac{\partial F/\partial\bfs}{\partial F/\partial\alpha}(f,\bfs).
$$
It can be verified that when $f\in (0,1-\epsilon]$, $\bfs\in B$, $\partial\alpha(f,\bfs)/\partial\bfs$ is bounded Lipschitz continuous with respect to $f$, and 
the derivative 
$$
    \frac{\partial l_{f}(\bfs)}{\partial \bfs} = 
    \begin{bmatrix}
    (D-\frac{\exp(\alpha+\beta X+\gamma E)}{1+\exp(\alpha+\beta X+\gamma E)})(X + \frac{\partial\alpha}{\partial\beta})\\
    (D - \frac{\exp(\alpha+\beta X+\gamma E)}{1+\exp(\alpha+\beta X+\gamma E)})(E +\frac{\partial\alpha}{\partial\gamma})\\
    \frac{X}{\theta} - \frac{1-X}{1-\theta} + (D-\frac{\exp(\alpha+\beta X+\gamma E)}{1+\exp(\alpha+\beta X+\gamma E)})\frac{\partial\alpha}{\partial\theta}\\
    \frac{E}{\pi} - \frac{1-E}{1-\pi}+(D-\frac{\exp(\alpha+\beta X+\gamma E)}{1+\exp(\alpha+\beta X+\gamma E)})\frac{\partial\alpha}{\partial\pi}
    \end{bmatrix}
$$
is also bounded Lipschitz continuous with respect to $f$, since $\exp(\alpha(f,\bfs))/(1+\exp(\alpha(f,\bfs)))$ is bounded Lipschitz continuous with respect to $f$. By the fact that the product of two bounded Lipschitz continuous functions is also bounded Lipschitz continuous, we have $\{\partial l_f(\bfs) / \partial\bfs\}\{\partial l_f(\bfs) / \partial\bfs^\top\}$ is Lipschitz continuous with respect to $f$ (assume the Lipschitz constant $L_{Bf}$). Moreover, $\{\partial l_f(\bfs) / \partial\bfs\}\{\partial l_f(\bfs) / \partial\bfs^\top\}$ is a continuously differentiable function with respect to $\bfs$ in the compact set $B$, so $B(f,\bfs)$ is Lipschitz continuous with respect to $\bfs$ (assume the Lipschitz constant $L_{Bs}$). Using $\Vert \bfs_{f_1}^*-\bfs_{f_0}^*\Vert \leq C_1\vert f_1-f_0\vert$ proved in Section \ref{sec:7.2}, we have that Equation~\eqref{eq:B_func} holds:
\begin{equation*}
\begin{aligned}
    \Vert B(f_1,\bfs_{f_1}^*) - B(f_0,\bfs_{f_0}^*)\Vert &\leq \Vert B(f_1,\bfs_{f_1}^*) - B(f_0,\bfs_{f_1}^*)\Vert + \Vert B(f_0,\bfs_{f_1}^*) - B(f_0,\bfs_{f_0}^*)\Vert\\
  & \leq (L_{Bf}+ L_{Bs}C_1)\vert f_1-f_0\vert.
\end{aligned}
\end{equation*}
Let $C_B = L_{Bf}+ L_{Bs}C_1$ which is defined in \eqref{eq:B_func}.

Similarly, 
$$
    \frac{\partial^2\alpha(f,\bfs)}{\partial\bfs\partial\bfs^\top} = - \frac{\frac{\partial F}{\partial\alpha}(\frac{\partial\alpha}{\partial\bfs})^2+2\frac{\partial^2 F}{\partial\alpha\partial\bfs}\frac{\partial\alpha}{\partial\bfs}+\frac{\partial^2 F}{\partial\bfs\partial\bfs^\top}}{\frac{\partial F}{\partial\alpha}}
$$
is also bounded Lipschitz continuous with respect to $f$.
Denote 
$$l_1(f,\bfs)=\frac{\partial l(\alpha,\bfs)}{\partial\alpha}, l_2(f,\bfs)=\frac{\partial l(\alpha,\bfs)}{\partial\bfs},$$ 
$$
l_{11}(f,\bfs) = \frac{\partial^2 l(\alpha,\bfs)}{\partial\alpha\partial\alpha},\ l_{12}(f,\bfs) = \frac{\partial^2 l(\alpha,\bfs)}{\partial\alpha\partial\bfs},\ l_{22}(f,\bfs) = \frac{\partial^2 l(\alpha,\bfs)}{\partial\bfs\partial\bfs}.
$$
Since $l_f(\bfs) = l(\alpha(f,\bfs),\bfs)$, we have
\begin{align*}
    \frac{\partial^2 l_f(\bfs)}{\partial\bfs\partial\bfs^\top}&= \frac{\partial}{\partial\bfs}\bigg[l_1(f,\bfs)\frac{\partial\alpha}{\partial\bfs}+l_2(f,\bfs)\bigg]\\
    &=\bigg[l_{11}(f,\bfs)\frac{\partial\alpha}{\partial\bfs}+l_{12}(f,\bfs)\bigg]\frac{\partial\alpha}{\partial\bfs} + l_1(f,\bfs)\frac{\partial^2\alpha}{\partial\bfs\partial\bfs^\top} + l_{12}(f,\bfs)\frac{\partial\alpha}{\partial\bfs}+l_{22}(f,\bfs)\\
    &= l_{11}(f,\bfs)\frac{\partial\alpha}{\partial\bfs}\frac{\partial\alpha}{\partial\bfs^\top} + 2l_{12}(f,\bfs)\frac{\partial\alpha}{\partial\bfs^\top} + l_1(f,\bfs)\frac{\partial^2\alpha}{\partial\bfs\partial\bfs^\top} + l_{22}(f,\bfs)
\end{align*}
is bounded Lipschitz continuous since each of the terms in the right hand side of the above equation is a bounded Lipschitz continuous function with respect to $f$. Also $\partial^2 l_f(\bfs)/(\partial\bfs\partial\bfs^\top)$ is continuously differentiable with respect to $\bfs$ in the compact region $B$, thus $A(f,\bfs)$ is also Lipschitz continuous with respect to $\bfs$. So we have that \eqref{eq:A_func} holds:
\begin{equation*}
\begin{aligned}
  \Vert A(f_1,\bfs_{f_1}^*) - A(f_0,\bfs_{f_0}^*)\Vert &\leq \Vert A(f_1,\bfs_{f_1}^*) - A(f_0,\bfs_{f_1}^*)\Vert + \Vert A(f_0,\bfs_{f_1}^*)-A(f_0,\bfs_{f_0}^*)\Vert\\
  &\leq C_A\vert f_1-f_0\vert.
\end{aligned}
\end{equation*}
Finally, following from \eqref{eq:A_func} and \eqref{eq:B_func}, we have \eqref{eq:var_target} holds.

\section{Additional discussion}\label{sec:additional_discussion}
Adjusting for independent risk factors in randomized clinical trials can help improve estimation efficiency and test power in linear regression analyses \cite[]{fisher1932, kahan2014}. In case-control studies, there is still debate on whether independent covariates should be adjusted for in logistic regression analyses. We theoretically explored three methods's estimation efficiency and power when both the covariate and exposure of interest are binary. Our results can be summarized as follows. First, the estimated odds ratio of the exposure effect with the independent covariate ignored ($\hat{\gamma}_M$) is smaller than that of the covariate-adjusted estimate ($\hat{\gamma}_A$). This provided theoretical justification for the empirical observations in the literature \cite[]{stringer2011}. Second, the variance of $\hat{\gamma}_M$ is smaller than that of $\hat{\gamma}_A$. This extended results in \citet{pirinen2012} where the outcome was rare. Third, the variance of the estimated odds ratio for the covariate-adjusted exposure effect lies between those of \textsc{Mar} and \textsc{Adj} if the covariate-exposure independence is explicitly accommodated in the maximum likelihood estimation (\textsc{AdjCon}). \textsc{AdjCon} is always more powerful than both \textsc{Mar} and \textsc{Adj}, \textsc{Mar} is more powerful than \textsc{Adj} at low outcome prevalence, and \textsc{Adj} is more powerful than \textsc{Mar} when the outcome prevalence is close to 0.5. Last, we show the statistical inference for the \textsc{AdjCon} method is not sensitive to the outcome prevalence misspecification. These results theoretically confirm the empirical findings in \citet{mpMLE2017}.

The main results above provide clear guidelines for choosing an appropriate approach in case-control studies. We suggest use the constraint maximum likelihood method if computational burden is not an issue. The marginal approach is prefered if the outcome prevalence is small, especially when one is interested in screening variables among a large number of potential risk factors (e.g., in genomewide association analysis studies). 

Our theoretical results were developed in a simple situation where the exposures of interest and the covariate were both binary. Further work is warranted to extend the current results to more general situations. For example, the exposure and covariate can be categorical or even continuous, there could be multiple independent covariates, and the sampling of cases and controls could be stratified. 

There are some works related to ours in the literature. Methods have been developed to exploit gene-environment independence and prevalence information in the analysis of case-control data \cite[]{piegorsch1994non, chatterjee2005semiparametric, mukherjee2008exploiting,chenchen2011,clayton2002, qin2014using}, to improve estimation efficiency and test power. \citet{piegorsch1994non} observed improved efficiency for estimating gene-environment interaction effects using case-control data, which was valid when the gene and environmental risk factors were independent in the population, and the outcome was rare. \citet{chatterjee2005semiparametric} extended this method to incorporate covariates and allowed for a stratified sampling in the context of logistic regression models. \citet{mukherjee2008exploiting}  developed an empirical Bayes shrinkage method to relax the gene-environment independence assumption required in \citet{chatterjee2005semiparametric}. \citet{chenchen2011} observed that no power improvement can be achieved by incorporating gene-environment independence if both gene and environmental factors were dichotomous. \citet{qin2014using} developed a rigorous statistical procedure to utilize covariate-specific outcome prevalence in the context of an exponential tilt model. The improvement in statistical efficiency of the method \textsc{AdjCon} is similar in spirit to these methods.

\bibliographystyle{apacite}
\bibliography{ref}

\end{document}